  \newtheorem{thm}{Theorem}[section]
 \newtheorem{cor}[thm]{Corollary}
 \newtheorem{prop}[thm]{Proposition}
 \newtheorem{defn}[thm]{Definition}
 \newtheorem{lemma}[thm]{Lemma}
 \newtheorem{rem}[thm]{Remark}
\def\C{\mathbb{C}}
\def\R{\mathbb{R}}
\def\S{\mathbb{S}}
\def\N{\mathbb{N}}
\def\T{\mathbb{T}}
\def\M{\mathbb{M}}
 \title{From Schoenberg coefficients to Schoenberg functions}
 \author{Christian Berg and  Emilio Porcu}
 \date{\today}
\begin{document}

 \maketitle
 
 \begin{abstract}
 In his seminal paper, Schoenberg (1942) characterized the class $\mathcal P(\S^d)$ of continuous functions $f:[-1,1] \to \R$ such that $f(\cos \theta(\xi,\eta))$ is positive definite on the product space $\S^d \times \S^d$, with $\S^d$ being the unit sphere of $\R^{d+1}$ and $\theta(\xi,\eta)$ being the great circle distance between $\xi,\eta\in\S^d$. In this paper, we consider the product space $\S^d \times G$, for $G$ a locally compact group, and define the class $\mathcal P(\S^d, G)$ of continuous functions $f:[-1,1]\times G \to \C$ such that $f(\cos \theta(\xi,\eta), u^{-1}v)$ is positive definite on $\S^d \times \S^d \times G \times G$. This offers a natural extension of  Schoenberg's Theorem. Schoenberg's second theorem corresponding to the Hilbert sphere $\S^\infty$ is also extended to this context. The case $G=\R$ is of special importance for probability theory and stochastic processes, because it characterizes completely the class of space-time covariance functions where the space is the sphere, being an approximation of Planet Earth.  
 \end{abstract}

 MSC: Primary 43A35, Secondary 33C55

{\bf Keywords}: Positive Definite; Space-Time covariances; Spherical Harmonics

\section{Introduction} 
Positive definite functions on groups or semigroups have a long history and are present in many
applications in  operator theory, potential theory, moment problems and several
other areas, such as spatial statistics. They enter as an important chapter in all treatments of harmonic analysis, and
can be traced back to papers by Carath{\'e}odory, Herglotz, Bernstein and Matthias, culminating in
Bochner's theorem from $1932-1933$ and by their connection to unitary group representations. See \cite{sasvari} and the survey in \cite{berg2008} for details. In his tour de force, Schoenberg \cite{S} considered the class $\mathcal P(\S^d)$ of continuous functions
 $f:[-1,1] \to \R$ such that the kernel $C: \S^d \times \S^d \to \R$ defined by 
\begin{equation} \label{covariance}
 C(\xi,\eta)= f(\cos \theta(\xi,\eta))=f(\xi\cdot\eta), \quad \xi,\eta \in \S^d, 
\end{equation} is positive definite on the $d$-dimensional unit sphere of $\R^{d+1}$, given as
\begin{equation*}\label{eq:sphere}
\mathbb S^d=\left\{x\in\mathbb R^{d+1}\mid \sum_{k=1}^{d+1} x_k^2=1\right\}, \;d\ge 1.
\end{equation*} 
The positive definiteness of $C$ means that for any $n\in\N$ and for any $\xi_1,\ldots,\xi_n\in\S^d$
the $n\times n$ symmetric matrix 
$$
\left[C(\xi_k,\xi_l)\right]_{k,l=1}^n
$$
is positive semidefinite, i.e., for any $(a_1,\ldots,a_n)\in\R^n$
$$
\sum_{k,l=1}^n C(\xi_k,\xi_l)a_ka_l\ge 0.
$$
 Concerning the notation above,  
$\theta(\xi,\eta) = \arccos (\xi \cdot \eta)$ and $\cdot$ denotes the scalar product in $\R^{d+1}$. The mapping $\theta$ is known under the name of geodesic (or great circle) distance, and the mapping $C$ above is called isotropic, because it only depends on the angle between any two points $\xi,\eta$ on the $d$-dimensional sphere of $\R^{d+1}$. Throughout, we shall make equivalent use of $\theta\in[0,\pi]$ or $\theta(\xi,\eta)$, whenever no confusion can arise.
In recent geostatistical literature the notation $\Psi_d$ from \cite{gneiting2013} has been used  for Schoenberg's class $\mathcal P (\S^d)$.

Schoenberg \cite{S} characterized the members of the class $\mathcal P(\S^d)$ as the functions of the form
\begin{equation*}\label{eq:Sch}
 f(\cos\theta)=\sum_{n=0}^\infty b_{n,d} c_n(d,\cos\theta),\quad b_{n,d}\ge 0,\theta\in [0,\pi],
\end{equation*}
with $\sum_{n=0}^\infty b_{n,d}<\infty$, where $c_n(d,x)$ are certain  polynomials of degree $n$ associated to $\mathbb S^d$, often called ultraspherical polynomials, see Equation
 \eqref{eq:nor}. Further details about this representation will be given subsequently. 
The class $\mathcal P(\S^d)$ has received considerable attention in the last two years, thanks to the review in Gneiting \cite{gneiting2013}, where an impressive list of references is offered.
 Statistical and probabilistic communities are especially interested in this class, because the functions $C$ as in Equation (\ref{covariance}) are the autocorrelation functions of isotropic Gaussian fields in $\S^d$.  

Gneiting finishes his {\em essay} \cite{gneiting2013} with a list of open problems contained in \cite{Gn}.  Problem number 16 is related to the representation of correlation functions of Gaussian fields $Z(\xi,u)$ defined over the sphere cross time $\S^d \times \R$, and being isotropic with respect to $\xi$ and stationary with respect
to time $u$. This leads to considering functions defined on the product space $\S^d \times \S^d \times \R \times\R$, so that the covariance
of the random variables $Z(\xi,u),Z(\eta,v)$
can be written as
\begin{equation} \label{covariance2} 
{\rm cov} \left ( Z(\xi,u),Z(\eta,v) \right )= f(\cos\theta(\xi,\eta),u-v), \qquad (\xi,u), (\eta,v) \in \S^d \times \R,
\end{equation}
for a  continuous function $f:[-1,1] \times \R\to \R$. Porcu et al. \cite{P:B:G} offer parametric models for such functions, that represent the covariances of Gaussian fields over $\S^d\times\R$.

Our characterization of the functions $f$ entering in Equation \eqref{covariance2} is the following extension of Schoenberg's Theorem:
$$
f(\cos\theta,u)=\sum_{n=0}^\infty \varphi_{n,d}(u) c_n(d,\cos\theta),\quad (\theta,u)\in[0,\pi]\times\mathbb R, 
$$
where $\varphi_{n,d}$ is a sequence of real-valued continuous positive definite functions on $\mathbb R$. The series is uniformly convergent, which is equivalent to  $\sum_{n=0}^\infty\varphi_{n,d}(0)<\infty$.

We get only real-valued positive definite functions $\varphi_{n,d}$ because the covariance in Equation \eqref{covariance2} is real and symmetric in $u,v$.
 
It turns out that in our characterization we can consider complex-valued functions $f:[-1,1]\times \R\to\C$ such that the kernel
$$
f(\cos\theta(\xi,\eta),u-v)
$$
is positive definite on $(\S^d\times\R)^2$ and furthermore, 
$\R$ can be replaced by an arbitrary locally compact group  $G$.

 In other words we shall characterize the set $\mathcal P(\S^d,G)$ of continuous functions $f:[-1,1]\times G\to \mathbb C$ such that the kernel
\begin{equation}\label{eq:pos}
f(\xi\cdot\eta,u^{-1}v)=f(\cos \theta(\xi,\eta),u^{-1}v),\quad \xi,\eta\in\mathbb S^d,\;u,v\in G
\end{equation}
is positive definite in the sense that for any $n\in\mathbb N$ and any $(\xi_1,u_1),\ldots (\xi_n,u_n)
\in \mathbb S^d\times G$ the $n\times n$-matrix
\begin{equation}\label{eq:pd}
 \left[f(\cos \theta(\xi_k,\xi_l),u_k^{-1}u_l)\right]_{k,l=1}^n
\end{equation}
is positive semidefinite, i.e., for any $(a_1,\ldots,a_n)\in\C^n$
$$
\sum_{k,l=1}^n f(\cos \theta(\xi_k,\xi_l),u_k^{-1}u_l)a_k\overline{a_l}\ge 0, 
$$
which is equivalent to the matrix of Equation \eqref{eq:pd} being hermitian and having nonnegative eigenvalues.
Here we follow the terminology of \cite{Bh} and \cite{H:J}.

 The characterization is given in Theorem~\ref{thm:main}.
If we restrict the vectors $\xi_1,\ldots,\xi_n\in \S^d$ from \eqref{eq:pd} to lie on the subsphere
$\S^{d-1}$, identified with the equator of $\S^d$, we see that $\mathcal P(\S^d,G)\subseteq \mathcal P(\S^{d-1},G)$. The inclusion is in fact strict, see Remark~\ref{thm:rem34} (iii).

We also consider
\begin{equation}\label{eq:infinity}
\mathcal P(\S^{\infty},G):=\bigcap_{d=1}^\infty \mathcal P(\S^d,G),
\end{equation} 
which is the set of continuous functions  $f:[-1,1]\times G\to \mathbb C$ such that the matrix in Equation \eqref{eq:pd} is positive semidefinite for any $d\in\N$. We note in passing that the notation 
$\mathcal P(\S^{\infty},G)$ suggests an intrinsic definition using the Hilbert sphere
$$
\S^\infty=\{(x_k)_{k\in\N}\in \R^\N | \sum_{k=1}^\infty x_k^2=1\},
$$
which is the unit sphere in the Hilbert sequence space $\ell_2$ of square summable real sequences.
The intrinsic definition of $\mathcal P(\S^{\infty},G)$ is as the set of continuous functions 
$f:[-1,1]\times G\to \mathbb C$ such that all matrices \eqref{eq:pd} are positive semidefinite when $(\xi_1,u_1),\ldots, (\xi_n,u_n)\in \mathbb S^{\infty}\times G$. 

That these two definitions are equivalent follows on the one hand because
  any $\S^d$ can be embedded in $\S^\infty$ by the mapping 
$$
(x_1,\ldots,x_{d+1})\in\S^d\mapsto
(x_1,\ldots,x_{d+1},0,0,\ldots)\in\S^\infty,
$$
 and on the other hand by using that any $\xi=(x_k)_{k\in\N}\in\S^\infty$ is the limit for $d\to\infty$ of the sequence of vectors $\xi^{(d)}$ embedded from $\S^d$, where 
$$
\xi^{(d)}:=\frac{(x_1,\ldots,x_{d+1},0,0,\ldots)}{\sqrt{x_1^2+\cdots+x_{d+1}^2}}\in\S^\infty.
$$
We prove a characterization of $\mathcal P(\S^{\infty},G)$ which is completely analogous to Schoenberg's Theorem 2 in \cite{S}, see  Theorem~\ref{thm:main2}. It builds on Ziegel's result in \cite{Z} that functions in $\mathcal P(\S^d)$ admit a continuous derivative of order $[(d-1)/2]$ in $]-1,1[$, and this result can be extended to our context. Here $[a]$ denotes the largest integer $\le a$.
 
At this place let us make some comments about the relation of our results to the existing literature.
As we shall see, Theorem~\ref{thm:main} has been proved for certain special compact groups $G$, but apparently not for non-compact groups like $\R$ or $\R^n$, which was the main motivation for the present paper. The compact cases lead to double sums with non-negative coefficients. The main difficulty in the non-compact case is the mixture of a sum and an integral, which was overcome by the crucial technical result in Lemma~\ref{thm:equi}. This is inspired by Proposition 13.4.4 in \cite{D}. 
 
The paper by Barbosa and Menegatto \cite{B:M} extends Ziegel's result from $\S^d$ to two-point homogeneous compact spaces $\M^d$. Although there is a formal analogy between these results and our extension of Ziegel's result, it does not seem to be possible to deduce one of these results from the other. However, one should expect that their results can be extended to the classes $\mathcal P(\M^d,G)$, obtained by replacing the sphere  $\S^d$ by an arbitrary compact two-point homogeneous space $\M^d$, and $G$ is a locally compact group as above.

Chapter 4 in Shapiro \cite{Sh} discusses Schoenberg's Theorem. An extension to real-valued continuous positive definite functions on $\S^d\times \T^N$ is given in \cite[Theorem 3.1]{Sh}. This result corresponds to our result, when the locally compact group $G$ is the $N$-dimensional torus $\T^N$. 

In a recent paper \cite{G:M:P} Guella, Menegatto and Peron consider isotropic kernels
\begin{equation}\label{eq:gmp}
K((\xi,\zeta),(\eta,\chi)=f(\xi\cdot\eta,\zeta\cdot \chi), \quad \xi,\eta\in\S^d,\quad \zeta,\chi\in\S^{d'},
\end{equation}
which are positive definite  on the product of two spheres $\S^d\times\S^{d'}$, where $d,d'\in\N\cup\{\infty\}$.

Using that the sphere $\S^{d'},d'<\infty$ can be identified with the homogeneous space $O(d'+1)/O(d')$,
their characterization of these functions can be obtained as a special case of our main theorems, see Section 6 for details. Here $O(d')$ is the compact group of orthogonal $d'\times d'$-matrices.

The result of \cite{G:M:P} for the case $d,d'<\infty$ can also be obtained as a special case of Theorem 4.11 in \cite{Ba}, which is an adaptation of the Peter-Weyl Theorem to compact homogeneous spaces $X=G/K$, where $G$ is a compact group and $K$ a compact subgroup. In addition Theorem 4.11 uses results from Bochner \cite{Bo}. It does not seem possible to obtain \cite[Th. 4.11]{Ba} from our results and vice versa.

The plan for the present paper is the following.  Section 2 discusses some necessary background material. Section 3 reports the main results and Section 4 is devoted to the proofs. In Section 5 we use the non-negative connection coefficients between the monomials and the Gegenbauer polynomials---a result given in Bingham \cite{Bi}--- to prove a formula relating the power series coefficients for $f\in\mathcal P(\S^{\infty},G)$ to the expansion coefficients  for $f$ with respect to the ultraspherical polynomials, see Theorem~\ref{thm:infty-d}. Finally in Section 6 we show how  some results from \cite{G:M:P} can be obtained from  our main results.

\section{Background}

We start with some expository material related to positive definiteness and to the Schoenberg class $\mathcal P(\S^d)$ as described in \cite{gneiting2013}.

We recall that for a locally compact group $G$, a  function $\varphi:G\to\mathbb C$ is called positive definite if
for any $n\in\mathbb N$ and any $u_1,\ldots,u_n\in G$ the $n\times n$-matrix 
$$
[\varphi(u_k^{-1}u_l)]_{k,l=1}^n
$$
is positive semidefinite, see e.g. \cite[p. 255]{D} (there called hermitian and positive) or \cite[p.14]{sasvari}. The set of continuous and positive definite functions on $G$ is denoted
$\mathcal P(G)$. It is known that any $\varphi \in \mathcal P(G)$ satisfies $|\varphi(u)|\le \varphi(e)$, where $e$ denotes the neutral element of the group. 
 In the case of an abelian group $G$ we use the additive notation, and the neutral element is denoted $0$. In this case the continuous positive definite functions are characterized by Bochner's Theorem, cf. \cite{R}, as the Fourier transforms
\begin{equation*}\label{eq:Boch}
\varphi(u)=\int_{\widehat{G}} (u,\gamma)\,{\rm d}\mu(\gamma),\quad u\in G,
\end{equation*}
where $\mu$ is a positive finite Radon measure on the dual group $\widehat{G}$ of continuous characters $\gamma:G\to\mathbb T$. Here $\mathbb T$ is the unit circle in the complex plane.

In order to describe Schoenberg's characterization of the class $\mathcal P(\S^d)$, we recall that the Gegenbauer polynomials $C_n^{(\lambda)}$
are given by the generating function (see \cite{G:R})
\begin{equation}\label{eq:Geg}
(1-2xr +r^2)^{-\lambda}=\sum_{n=0}^\infty C_n^{(\lambda)}(x) r^n,\quad |r|<1, x\in \mathbb C.
\end{equation}

One has to assume $\lambda>0$, and for $\lambda=0$ \eqref{eq:Geg} has to be replaced by
\begin{equation}\label{eq:Che}
\frac{1-xr}{1-2xr +r^2}=\sum_{n=0}^\infty C_n^{(0)}(x) r^n,\quad |r|<1, x\in \mathbb C.
\end{equation}
It is well-known that 
$$
C_n^{(0)}(x)=T_n(x)=\cos(n\arccos x), n=0,1,\ldots
$$
are the Chebyshev polynomials of the first kind. For $\lambda>0$, we have the classical orthogonality relation:
\begin{equation}\label{eq:orth}
\int_{-1}^1 (1-x^2)^{\lambda-1/2}C_n^{(\lambda)}(x)C_m^{(\lambda)}(x)\,{\rm d}x=
\frac{\pi \Gamma(n+2\lambda) 2^{1-2\lambda}}{\Gamma^2(\lambda)(n+\lambda) n!}\delta_{m,n},
\end{equation} with $\delta_{m,n}$ denoting the Kronecker delta. 
When $\lambda=0$, Equation (\ref{eq:orth}) is replaced by
\begin{equation}\label{eq:spec}
\int_{-1}^1 (1-x^2)^{-1/2}T_n(x)T_m(x)\,{\rm d}x=\left\{\begin{array}{ll}
\frac{\pi }{2}\delta_{m,n} & \mbox{if $n>0$}\\
\pi\delta_{m,n} & \mbox{if $n=0$},
\end{array}
\right.
\end{equation}
which is equivalent to the classical orthogonality relations of the family $\cos(nx),n=0,1,\ldots.$
 
Putting $x=1$ in \eqref{eq:Geg} one easily gets $C_n^{(\lambda)}(1)=(2\lambda)_n/n!$ valid for $\lambda>0$, while $T_n(1)=1$. For the benefit of the reader we recall that for $a\in\C$ 
$$
(a)_n=a(a+1)\cdots(a+n-1),\;n\ge 1,\quad (a)_0=1.
$$

It is of fundamental importance that
$$
|C_n^{(\lambda)}(x)|\le C_n^{(\lambda)}(1),\quad x\in[-1,1].
$$
Schoenberg used the notation $P_n^{(\lambda)}=C_n^{(\lambda)}$ in \cite{S}. The special value
$\lambda=(d-1)/2$ is relevant for the sphere $\mathbb S^d$ because of the relation to spherical harmonics, which will be explained now. A spherical harmonic of degree $n$ for $\mathbb S^d$  is  the restriction to $\mathbb S^d$ of a real-valued harmonic homogeneous polynomial in $\mathbb R^{d+1}$ of degree $n$. Together with the zero function, the spherical harmonics of degree $n$  form a finite dimensional vector space denoted $\mathcal H_n(d)$. It is a subspace of 
the space  ${\cal C}(\mathbb S^d)$ of continuous functions on $\mathbb S^d$.  We have
\begin{equation}\label{eq:dim}
N_n(d):=\dim \mathcal H_n(d)=\frac{(d)_{n-1}}{n!}(2n+d-1),\;n\ge 1,\quad N_0(d)=1,
\end{equation} 
cf. \cite[p.4]{M} or \cite[p.3]{D:X}.

The surface measure of the sphere is denoted $\omega_d$, and it is of total mass
\begin{equation}\label{eq:mass} 
\sigma_d=\omega_d(\S^d)=\frac{2\pi^{(d+1)/2}}{\Gamma((d+1)/2)}.
\end{equation}

The orthogonal group $O(d+1)$ of orthogonal $(d+1)\times(d+1)$ matrices operates on $\S^d$, and $\omega_d$ is invariant under $O(d+1)$.

The spaces $\mathcal H_n(d)$ are mutually orthogonal subspaces of the Hilbert space $L^2(\mathbb S^d,\omega_d)$. The norm of $F\in L^2(\mathbb S^d,\omega_d)$ is denoted $||F||_2$.

For any $F\in L^2(\mathbb S^d,\omega_d)$ we have the orthogonal expansion
\begin{equation}\label{eq:exp}
F=\sum_{n=0}^\infty S_n,\, S_n\in\mathcal H_n(d),\quad ||F||_2^2=\sum_{n=0}^\infty ||S_n||_2^2,
\end{equation}
where the first series converges in $L^2(\mathbb S^d,\omega_d)$, and the second series is  Parseval's equation. Here $S_n$ is the orthogonal projection of $F$ onto $\mathcal H_n(d)$  given as
\begin{equation}\label{eq:proj}
S_n(\xi)=\frac{N_n(d)}{\sigma_d}\int_{\mathbb S^d}c_n(d,\xi\cdot \eta)F(\eta)\,{\rm d}\omega_d(\eta).
\end{equation}
See the addition theorem for spherical harmonics, \cite[p.10]{M} or \cite[(2.4) p. 98]{S}.
Here $c_n(d,x)$ is defined
as  the normalized Gegenbauer polynomial being 1 for $x=1$ when $\lambda=(d-1)/2$, i.e., by
\begin{equation}\label{eq:nor}
c_n(d,x)=C_n^{((d-1)/2)}(x)/C_n^{((d-1)/2)}(1)=\frac{n!}{(d-1)_n}C_n^{((d-1)/2)}(x).
\end{equation}      
Since the Chebyshev polynomials $T_n=C_n^{(0)}$ are already normalized, the last expression is not valid for $d=1$ and $c_n(1,x)=T_n(x)$.
 
Specializing the orthogonality relation \eqref{eq:orth} to $\lambda=(d-1)/2$ and using the Equations \eqref{eq:dim},\eqref{eq:mass}, we get for $d\in\N$
\begin{equation}\label{eq:orthspec}
\int_{-1}^1 (1-x^2)^{d/2-1}c_n(d,x)c_m(d,x)\,{\rm d}x=
\frac{\sigma_d}{N_n(d)\sigma_{d-1}}\delta_{m,n}.
\end{equation}
(Define $\sigma_0=2$).
All these formulas can also be found in \cite{B}, where the notation is $p_n(d+1,x)=c_n(d,x)$.

This makes it possible to formulate the celebrated theorem of Schoenberg, cf. \cite{S}:

\begin{thm}[Schoenberg 1942]\label{thm:S-sphere} Let $d\in\N$ be fixed.
A continuous function $f: [-1,1] \to \R$ belongs to the class $\mathcal P(\S^d)$ if and only if 
\begin{equation}\label{eq:Sch2}
f(\cos\theta)=\sum_{n=0}^\infty b_{n,d} c_n(d,\cos\theta),\quad b_{n,d}\ge 0,\theta\in [0,\pi],
\end{equation}
for a  sequence $(b_{n,d})_{n=0}^{\infty}$ with $\sum_{n=0}^\infty b_{n,d}<\infty$ given as
\begin{equation}\label{eq:Sch21}
b_{n,d}=\frac{N_n(d)\sigma_{d-1}}{\sigma_d}\int_{-1}^1 f(x)c_n(d,x)(1-x^2)^{d/2-1}\,{\rm d}x.
\end{equation}
\end{thm}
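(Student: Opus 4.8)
The plan is to prove the two implications separately, using the addition theorem for spherical harmonics for the easy (sufficiency) direction and the spectral theory of the associated integral operator for the harder (necessity) direction.

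For sufficiency, suppose $f$ has the stated form with $b_{n,d}\ge 0$ and $\sum_n b_{n,d}<\infty$. The key observation is that each kernel $(\xi,\eta)\mapsto c_n(d,\xi\cdot\eta)$ is positive definite on $\S^d$. Indeed, choosing an orthonormal basis $Y_{n,1},\dots,Y_{n,N_n(d)}$ of $\mathcal H_n(d)$ in $L^2(\S^d,\omega_d)$, the addition theorem underlying the projection formula \eqref{eq:proj} gives $c_n(d,\xi\cdot\eta)=\tfrac{\sigma_d}{N_n(d)}\sum_{j=1}^{N_n(d)}Y_{n,j}(\xi)Y_{n,j}(\eta)$, which is a finite non-negative combination of the rank-one positive definite kernels $Y_{n,j}(\xi)Y_{n,j}(\eta)$. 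A non-negative combination $\sum_n b_{n,d}\,c_n(d,\xi\cdot\eta)$ of positive definite kernels is again positive definite, and since $|c_n(d,x)|\le c_n(d,1)=1$ the bound $\sum_n b_{n,d}<\infty$ forces uniform and absolute convergence of the series, so the limit $f(\xi\cdot\eta)$ is continuous. This yields $f\in\mathcal P(\S^d)$.

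For necessity, I would pass to the integral operator $T_C$ on $L^2(\S^d,\omega_d)$ with kernel $C(\xi,\eta)=f(\xi\cdot\eta)$. Continuity of $f$ makes $T_C$ a self-adjoint Hilbert--Schmidt operator, and positive definiteness of $C$ makes it a positive operator: for $g\in L^2(\S^d,\omega_d)$ the quadratic form $\langle T_C g,g\rangle=\int\int C(\xi,\eta)g(\eta)\overline{g(\xi)}\,{\rm d}\omega_d(\eta)\,{\rm d}\omega_d(\xi)$ is a limit of Riemann sums $\sum_{k,l}C(\xi_k,\xi_l)a_k\overline{a_l}\ge 0$. Since the kernel is zonal, the Funk--Hecke theorem (equivalently, a consequence of \eqref{eq:proj} together with the $O(d+1)$-invariance of $C$) shows that $T_C$ acts on $\mathcal H_n(d)$ as multiplication by the scalar $\lambda_n=\sigma_{d-1}\int_{-1}^1 f(x)c_n(d,x)(1-x^2)^{d/2-1}\,{\rm d}x$. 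Testing $T_C\ge 0$ against any $Y\in\mathcal H_n(d)$ gives $\lambda_n\|Y\|_2^2\ge 0$, hence $\lambda_n\ge 0$; and reading off coefficients via the reproducing-kernel identity $\tfrac{N_n(d)}{\sigma_d}c_n(d,\xi\cdot\eta)$ from \eqref{eq:proj} yields $b_{n,d}=\tfrac{N_n(d)}{\sigma_d}\lambda_n\ge 0$, which is precisely the formula \eqref{eq:Sch21} and is consistent with the orthogonality relation \eqref{eq:orthspec}.

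It remains to show that the Gegenbauer expansion converges uniformly to $f$ and that $\sum_n b_{n,d}<\infty$; this is the step I expect to be the main obstacle. I would invoke Mercer's theorem for the continuous positive definite kernel $C$ on the compact sphere: it provides an absolutely and uniformly convergent expansion $C(\xi,\eta)=\sum_n\lambda_n\sum_{j}Y_{n,j}(\xi)Y_{n,j}(\eta)$, in which the eigenfunctions are grouped according to the eigenspaces $\mathcal H_n(d)$. Applying the addition theorem once more rewrites this as $f(\xi\cdot\eta)=\sum_n b_{n,d}\,c_n(d,\xi\cdot\eta)$ with uniform convergence. Specializing to $\xi=\eta$, so that $\xi\cdot\eta=1$ and $c_n(d,1)=1$, yields $f(1)=\sum_n b_{n,d}<\infty$, which is the required summability and simultaneously certifies uniform convergence via $|c_n(d,x)|\le 1$. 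The delicate points are the clean passage from the operator-level Mercer expansion to the scalar identity through the addition theorem, and the verification that the spectral data of $T_C$ are exhausted by the spaces $\mathcal H_n(d)$; both rest on the zonal, hence $O(d+1)$-invariant, nature of $C$.
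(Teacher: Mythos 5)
Your argument is correct, but for the necessity direction it takes a genuinely different route from the one the paper uses. The paper states Theorem~\ref{thm:S-sphere} without a separate proof and obtains it as the special case $G=\{e\}$ of Theorem~\ref{thm:main}; in that proof the non-negativity of $b_{n,d}$ is derived by applying the integral form of positive definiteness (Lemma~\ref{thm:equi}, with $\mu=\omega_d$) to the product $f(x)c_n(d,x)$, which is again in $\mathcal P(\S^d)$ by closure under products (Proposition~\ref{thm:elem}), while the uniform convergence and the summability $\sum b_{n,d}=f(1)<\infty$ are obtained without Mercer's theorem: the Laplace series of the zonal function $\xi\mapsto f(\varepsilon_1\cdot\xi)$ is Abel-summable to $f$ pointwise (M\"uller's Theorem 9), Funk--Hecke identifies its components as $b_{n,d}\,c_n(d,\varepsilon_1\cdot\xi)$, and monotone convergence applied at $\xi=\varepsilon_1$ turns Abel summability of the non-negative series $\sum r^n b_{n,d}$ into $\sum b_{n,d}=f(1)$, after which the Weierstrass M-test and Abel's theorem identify the uniformly convergent sum with $f$. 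Your spectral route --- positivity of the integral operator $T_C$, Funk--Hecke to diagonalize it on the spaces $\mathcal H_n(d)$ (whose completeness in $L^2(\S^d,\omega_d)$ is what guarantees that the spectrum is exhausted), and Mercer's theorem for the absolute and uniform convergence --- is a standard and valid alternative in this compact setting, and it delivers $b_{n,d}\ge 0$ and the expansion in one stroke; the only points you should make explicit are the density argument upgrading $\langle T_Cg,g\rangle\ge 0$ from discrete sums to all $g\in L^2(\S^d,\omega_d)$, and the fact that absolute convergence in Mercer's expansion justifies regrouping the eigenfunctions by degree before invoking the addition theorem. The trade-off is that the paper's Abel-summability argument, unlike Mercer's theorem, survives the passage to a non-compact group $G$, where one must handle a mixture of a sum and an integral; that is precisely why the authors organize the proof the way they do.
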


Some comments are in order. As already noticed $\mathcal P(\S^d)\subseteq \mathcal P(\S^{d-1})$, and therefore $f\in\mathcal P(\S^d)$ has  $d$ different expansions like Equation \eqref{eq:Sch2}. In \cite{gneiting2013}
it is proved that $\mathcal P(\S^d)$ is strictly included in $\mathcal P(\S^{d-1})$ by showing that $c_n(d-1,\cdot)\in\mathcal P(\S^{d-1})\setminus\mathcal P(\S^d)$ when $n\ge 2$.

 When $f(1)=1$, then $(b_{n,d})$ is a probability sequence and by analogy with what was done in 
Daley and Porcu \cite{djdep2013}, the coefficients $b_{n,d}$ were called $d$-Schoenberg coefficients and the sequence $(b_{n,d})_{n\ge 0}$ a $d$-Schoenberg sequence in \cite{gneiting2013}. This stresses the fact that such a sequence is also related to the dimension of the sphere $\S^d$, where positive definiteness is attained. When $d=1$, then the representation in Equation (\ref{eq:Sch2}) reduces to 
\begin{equation*}\label{eq:Sch3}
f(\cos\theta)=\sum_{n=0}^\infty b_{n,1} \cos(n \theta),\quad b_{n,1}\ge 0,\theta\in [0,\pi],
\end{equation*}
and for $d=2$ the Gegenbauer polynomials simplify to Legendre polynomials. 

Schoenberg also studied $\mathcal P(\S^{\infty}):= \bigcap_{d \ge 1} \mathcal P(\S^d)$, which can
be considered as the set of continuous functions $f:[-1,1]\to\R$ such that the matrix
$$
[f(\xi_k\cdot\xi_l)]_{k,l=1}^n
$$
is positive semidefinite for any $n\in\N$ and any $\xi_1,\ldots,\xi_n\in\S^\infty$. See the discussion after Equation \eqref{eq:infinity}.

A second theorem of Schoenberg, see  \cite[p.102]{S}, states that $f\in \mathcal P(\S^{\infty})$ if and only if
\begin{equation}\label{eq:Sch4}
f(\cos\theta)=\sum_{n=0}^\infty b_{n} (\cos\theta)^n,\quad b_{n}\ge 0,\theta\in [0,\pi],
\end{equation}
where $\sum_{n=0}^\infty b_n<\infty$.

A wealth of examples and interesting results can be found in \cite{gneiting2013}. Observe that Gneiting makes explicit distinction between strictly and non-strictly positive definite functions on spheres. When $d\ge 2$ the former class occurs when, in Equation (\ref{eq:Sch2}), the $d$-Schoenberg coefficients are strictly positive for infinitely many even and odd $n$. The latter class occurs when the $d$-Schoenberg coefficients are just non-negative. Such a distinction is beyond the scope of this paper.

\section{Main Results}

We start  with a formal definition of the main classes to be discussed.

\begin{defn} Let $G$ denote a locally compact group with neutral element $e$, and let $d=1,2,\ldots,\infty$. The set of continuous functions $f:[-1,1]\times G\to\C$ such that all matrices of the form \eqref{eq:pd} are  positive semidefinite is denoted $\mathcal P(\S^d,G)$.
\end{defn}

We recall from the introduction that $\mathcal P(\S^{\infty},G)=\bigcap_{d\ge 1} \mathcal P(\S^d,G)$.
The following Proposition states some properties of $\mathcal P(\S^d,G)$ which are easily obtained. The proofs are left to the reader.

\begin{prop}\label{thm:elem}
\begin{enumerate}
\item[{\rm (i)}] For $f_1,f_2\in\mathcal P(\S^d,G)$ and $r\ge 0$ we have $rf_1, f_1+f_2$, $f_1 f_2\in\mathcal P(\S^d,G)$. 
\item[{\rm (ii)}] For a net of functions $(f_i)_{i\in I}$ from $\mathcal P(\S^d,G)$ converging pointwise to a continuous function $f:[-1,1]\times G\to \C$, we have $f\in\mathcal P(\S^d,G)$.
\item[{\rm (iii)}] For $f\in\mathcal P(\S^d,G)$ we have $f(\cdot,e)\in\mathcal P(\S^d)$ and $f(1,\cdot)\in\mathcal P(G)$.
\item[{\rm (iv)}] For $f\in\mathcal P(\S^d)$ and $g\in\mathcal P(G)$ we have $f\otimes g\in \mathcal P(\S^d,G)$, where
$f\otimes g(x,u):=f(x)g(u)$ for $(x,u)\in[-1,1]\times G$. In particular we have
$f\otimes 1_G\in \mathcal P(\S^d,G)$ and $f\mapsto f\otimes 1_G$ is an embedding of $\mathcal P(\S^d)$ into
$\mathcal P(\S^d,G)$.
\end{enumerate}
\end{prop}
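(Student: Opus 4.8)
The plan is to reduce each of the four assertions to elementary facts about positive semidefinite matrices, the only nontrivial ingredient being the Schur product theorem, which states that the entrywise (Hadamard) product of two positive semidefinite matrices is again positive semidefinite (see \cite{H:J}). Throughout, continuity of the resulting functions on $[-1,1]\times G$ is immediate from continuity of the given data together with the joint continuity of $(\xi,\eta)\mapsto\cos\theta(\xi,\eta)$ and of the group operations, so only positive semidefiniteness of the matrices \eqref{eq:pd} requires verification. I would fix once and for all $n\in\N$ and points $(\xi_1,u_1),\ldots,(\xi_n,u_n)\in\S^d\times G$, and write $A^{(h)}=[h(\cos\theta(\xi_k,\xi_l),u_k^{-1}u_l)]_{k,l=1}^n$ for the matrix associated to a function $h$.

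For (i), the identities $A^{(rf_1)}=rA^{(f_1)}$ and $A^{(f_1+f_2)}=A^{(f_1)}+A^{(f_2)}$ settle the first two claims, since a nonnegative multiple and a sum of positive semidefinite matrices are positive semidefinite. For the product, $A^{(f_1f_2)}$ is precisely the Hadamard product $A^{(f_1)}\circ A^{(f_2)}$, so the Schur product theorem applies. For (ii), I would use that positive semidefiniteness is a closed condition: for each fixed $a\in\C^n$ the quadratic form $\sum_{k,l}A^{(f_i)}_{kl}a_k\overline{a_l}$ is nonnegative and, by pointwise convergence $f_i\to f$, converges to the finite sum $\sum_{k,l}A^{(f)}_{kl}a_k\overline{a_l}$; hence the limit is nonnegative and $A^{(f)}$ is positive semidefinite.

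For (iii), the two restrictions are obtained by collapsing one of the two variables. Choosing $u_1=\cdots=u_n=e$ gives $u_k^{-1}u_l=e$ throughout, so $A^{(f)}$ reduces to $[f(\cos\theta(\xi_k,\xi_l),e)]_{k,l}$, and positive semidefiniteness of this matrix for all choices of the $\xi_k$ yields $f(\cdot,e)\in\mathcal P(\S^d)$. Here I would note that Hermiticity of the matrix together with the symmetry $\theta(\xi_k,\xi_l)=\theta(\xi_l,\xi_k)$ forces $f(x,e)=\overline{f(x,e)}$, so $f(\cdot,e)$ is automatically real-valued, as required by the definition of $\mathcal P(\S^d)$. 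Symmetrically, taking $\xi_1=\cdots=\xi_n$ equal to a single point gives $\cos\theta(\xi_k,\xi_l)=1$ and reduces $A^{(f)}$ to $[f(1,u_k^{-1}u_l)]_{k,l}$, whence $f(1,\cdot)\in\mathcal P(G)$.

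For (iv), the matrix $A^{(f\otimes g)}$ factors as the Hadamard product of $[f(\cos\theta(\xi_k,\xi_l))]_{k,l}$, positive semidefinite because $f\in\mathcal P(\S^d)$, and $[g(u_k^{-1}u_l)]_{k,l}$, positive semidefinite because $g\in\mathcal P(G)$; the Schur product theorem again gives the claim. The special case follows since the constant function $1_G$ lies in $\mathcal P(G)$ (its matrices are all-ones matrices), and linearity together with injectivity of $f\mapsto f\otimes 1_G$ makes it an embedding. I expect no genuine obstacle in any of this: the only step beyond bookkeeping is the invocation of the Schur product theorem in parts (i) and (iv), which is why the authors leave the proof to the reader.
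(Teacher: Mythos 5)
Your proof is correct, and since the paper explicitly leaves the proof of Proposition~\ref{thm:elem} to the reader, your argument---reduction to closure properties of positive semidefinite matrices, with the Schur product theorem handling $f_1f_2$ and $f\otimes g$, and the specializations $u_k=e$ resp.\ $\xi_k=\xi_1$ for part (iii)---is exactly the intended one. Your observation that Hermiticity plus the symmetry of $\theta$ forces $f(\cdot,e)$ to be real-valued is a worthwhile detail, since $\mathcal P(\S^d)$ consists of real-valued functions by definition.
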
 

Our first main theorem can be stated as follows. The proof will be given in Section 4.

\begin{thm}\label{thm:main} Let $d\in\N$  and let $f:[-1,1]\times G\to \mathbb C$ be a continuous function. Then
$f$ belongs to $\mathcal P(\S^d,G)$ if and only if there exists
 a sequence $(\varphi_{n,d})_{n\ge 0}$ from $\mathcal P(G) $ with $\sum \varphi_{n,d}(e)<\infty$ 
such that
\begin{equation}\label{eq:expand}
f(\cos\theta,u)=\sum_{n=0}^\infty \varphi_{n,d}(u) c_n(d,\cos\theta),\quad \theta\in[0,\pi],\;u\in G.
\end{equation}
The above expansion is uniformly convergent for $(\theta,u)\in [0,\pi]\times G$, and we have
\begin{equation}\label{eq:coef}
\varphi_{n,d}(u)=\frac{N_n(d)\sigma_{d-1}}{\sigma_d}\int_{-1}^1 f(x,u)c_n(d,x)(1-x^2)^{d/2-1}\,{\rm d}x.
\end{equation}
\end{thm}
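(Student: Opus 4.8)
The plan is to prove the two implications separately; the sufficiency is immediate from the stability properties already recorded in Proposition~\ref{thm:elem}, while the necessity carries the content. For sufficiency, suppose $f$ admits the expansion \eqref{eq:expand}. Each summand factorizes as $c_n(d,\cdot)\otimes\varphi_{n,d}$, where $c_n(d,\cdot)\in\mathcal P(\S^d)$ by Theorem~\ref{thm:S-sphere} (apply it to the Schoenberg sequence $\delta_{mn}$) and $\varphi_{n,d}\in\mathcal P(G)$ by hypothesis; hence each summand lies in $\mathcal P(\S^d,G)$ by Proposition~\ref{thm:elem}(iv). Since $|c_n(d,\cos\theta)|\le c_n(d,1)=1$ and $|\varphi_{n,d}(u)|\le\varphi_{n,d}(e)$, the hypothesis $\sum_n\varphi_{n,d}(e)<\infty$ and the Weierstrass $M$-test give uniform convergence on $[0,\pi]\times G$. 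The partial sums lie in $\mathcal P(\S^d,G)$ by Proposition~\ref{thm:elem}(i), and Proposition~\ref{thm:elem}(ii) places the continuous limit $f$ in $\mathcal P(\S^d,G)$.

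For necessity, assume $f\in\mathcal P(\S^d,G)$ and \emph{define} $\varphi_{n,d}$ by the integral \eqref{eq:coef}. Continuity of $\varphi_{n,d}$ on $G$ follows from joint continuity of $f$ and compactness of $[-1,1]$ (local uniform continuity of $u\mapsto f(\cdot,u)$ against the finite weight $(1-x^2)^{d/2-1}$). For the summability I would avoid a direct estimate: by Proposition~\ref{thm:elem}(iii) we have $f(\cdot,e)\in\mathcal P(\S^d)$, so Theorem~\ref{thm:S-sphere} applies to it, and comparing \eqref{eq:Sch21} with \eqref{eq:coef} at $u=e$ identifies $\varphi_{n,d}(e)$ with the Schoenberg coefficient $b_{n,d}\ge0$ of $f(\cdot,e)$; the same theorem yields $\sum_n\varphi_{n,d}(e)=f(1,e)<\infty$, giving nonnegativity at $e$ and summability in one stroke.

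The step I expect to be hardest is showing $\varphi_{n,d}\in\mathcal P(G)$, and the route I would take is to integrate the positive definite kernel against spherical harmonics. Fix a real $L^2(\omega_d)$-orthonormal basis $\{Y_{n,j}\}_{j=1}^{N_n(d)}$ of $\mathcal H_n(d)$, so that the addition theorem \eqref{eq:proj} reads $\sum_{j}Y_{n,j}(\xi)Y_{n,j}(\eta)=\frac{N_n(d)}{\sigma_d}c_n(d,\xi\cdot\eta)$. Granting the integrated form of positive definiteness of $f$ applied, for each $j$, to the complex measure $\mu_j=\sum_k a_k\,(Y_{n,j}\,\omega_d)\otimes\delta_{u_k}$ on $\S^d\times G$ (with $u_1,\dots,u_m\in G$, $a_1,\dots,a_m\in\C$ arbitrary), one gets
\begin{equation*}
\sum_{k,l}a_k\overline{a_l}\int_{\S^d}\int_{\S^d} f(\xi\cdot\eta,u_k^{-1}u_l)\,Y_{n,j}(\xi)Y_{n,j}(\eta)\,{\rm d}\omega_d(\xi)\,{\rm d}\omega_d(\eta)\ge0 .
\end{equation*}
Summing over $j$, using the addition theorem and the standard evaluation $\int_{\S^d} g(\xi\cdot\eta)\,{\rm d}\omega_d(\eta)=\sigma_{d-1}\int_{-1}^1 g(t)(1-t^2)^{d/2-1}\,{\rm d}t$ (the identity underlying \eqref{eq:orthspec}), the resulting sum of double integrals collapses to $\sigma_d\,\varphi_{n,d}(u_k^{-1}u_l)$ by the very definition \eqref{eq:coef}, so that $\sum_{k,l}a_k\overline{a_l}\,\varphi_{n,d}(u_k^{-1}u_l)\ge0$ and $\varphi_{n,d}\in\mathcal P(G)$. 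The genuine obstacle is licensing the passage from the finite positive definiteness of \eqref{eq:pd} to the integrated inequality above: since $\S^d$ is compact and only finitely many group points $u_k$ occur, I would obtain it by approximating the spherical integrals by Riemann sums and passing to the limit via continuity of $f$, which is precisely the purpose of the technical Lemma~\ref{thm:equi} inspired by \cite[Prop.~13.4.4]{D}.

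Finally, to recover \eqref{eq:expand} itself, form $\tilde f(\cos\theta,u):=\sum_n\varphi_{n,d}(u)c_n(d,\cos\theta)$, which converges uniformly by the same $M$-test as in the sufficiency part and hence is continuous. Integrating termwise against $c_m(d,\cdot)(1-x^2)^{d/2-1}$ and using orthogonality \eqref{eq:orthspec} shows that $\tilde f(\cdot,u)$ has Gegenbauer coefficients $\varphi_{n,d}(u)$, which by \eqref{eq:coef} are also those of $f(\cdot,u)$. Since the polynomials $\{c_n(d,\cdot)\}$ form a complete orthogonal system in $L^2([-1,1],(1-x^2)^{d/2-1}\,{\rm d}x)$, the two continuous functions $f(\cdot,u)$ and $\tilde f(\cdot,u)$ coincide for every $u\in G$, which establishes the representation together with the coefficient formula and completes the proof.
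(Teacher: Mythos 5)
Your proof is correct, and while it shares the paper's overall skeleton --- Lemma~\ref{thm:equi} as the crucial technical input, the integral \eqref{eq:coef} as the definition of $\varphi_{n,d}$, and the same sufficiency argument --- it diverges from the paper's proof in three sub-steps, each legitimately. (1) For $\varphi_{n,d}\in\mathcal P(G)$ the paper applies the integrated inequality of Lemma~\ref{thm:equi} to the \emph{product} $f(x,u)c_n(d,x)$, which lies in $\mathcal P(\S^d,G)$ by Proposition~\ref{thm:elem}, tested against the plain product measure $\omega_d\otimes\sigma$; you instead test $f$ itself against measures with spherical-harmonic densities $Y_{n,j}\,\omega_d$ and sum over an orthonormal basis of $\mathcal H_n(d)$ via the addition theorem \eqref{eq:proj}. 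Both collapse to the same one-dimensional integral, and your computation of the constants checks out; the paper's version avoids the basis but needs the closure of $\mathcal P(\S^d,G)$ under products, while yours makes the role of $\mathcal H_n(d)$ explicit. (2) For $\sum_n\varphi_{n,d}(e)<\infty$ you simply invoke Theorem~\ref{thm:S-sphere} for $f(\cdot,e)\in\mathcal P(\S^d)$ and match \eqref{eq:coef} at $u=e$ with \eqref{eq:Sch21}; the paper instead rederives this via Abel summability of the spherical-harmonics expansion of $\xi\mapsto f(\varepsilon_1\cdot\xi,e)$ together with monotone convergence. Your shortcut is cleaner given that Schoenberg's theorem is already quoted as background. (3) To identify $f$ with the uniformly convergent sum you use orthogonality \eqref{eq:orthspec} plus completeness of $\{c_n(d,\cdot)\}$ in the weighted $L^2$ space and continuity, whereas the paper uses the Funk--Hecke formula, Abel summability of the harmonic expansion, and Abel's theorem; again both are sound and yours is the more elementary. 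The only point to make explicit if you write this up is the one you flag yourself: the passage to the integrated inequality for the measures $\mu_j$ is exactly Lemma~\ref{thm:equi}(ii), which must be proved first (as the paper does via the partition argument of Lemma~\ref{thm:top}).
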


We call the coefficient functions $\varphi_{n,d}$ $d$-Schoenberg functions for
 $f\in\mathcal P(\S^d,G)$, and the sequence $(\varphi_{n,d})_{n\ge 0}$ a $d$-Schoenberg sequence of functions.

Like in Schoenberg's Theorem, any function $f\in\mathcal P(\S^d,G)$ has  $d$ expansions like Equation \eqref{eq:expand}.

\begin{rem}\label{thm:rem34}
{\rm (i) For a function $f\in\mathcal P(\S^d,G)$ the $d$-Schoenberg coefficients of $f(\cdot,e)\in\mathcal P(\S^d)$ are $b_{n,d}=\varphi_{n,d}(e)$, where $\varphi_{n,d}$ are the $d$-Schoenberg functions for $f$. 

(ii) For a function $f\otimes g\in\mathcal P(\S^d,G)$ given in Proposition~\ref{thm:elem} part (iv), we have $\varphi_{n,d}=b_{n,d}g$, where $\varphi_{n,d}$  are the $d$-Schoenberg functions  for $f\otimes g$ and $b_{n,d}$ are the $d$-Schoenberg coefficients for $f$. In particular for $f\otimes 1_G\in\mathcal P(\S^d,G)$ we have $\varphi_{n,d}(u)=b_{n,d}$.

(iii) We can now see that $\mathcal P(\S^d,G)$ is strictly included in $\mathcal P(\S^{d-1},G)$ for any locally compact group $G$. In fact, Gneiting \cite{gneiting2013} showed the existence of $f\in\mathcal P(\S^{d-1})\setminus\mathcal P(\S^d)$, and for any such $f$ we have $f\otimes 1_G\in\mathcal P(\S^{d-1},G)\setminus\mathcal P(\S^d,G)$, because if $f\otimes 1_G\in\mathcal P(\S^d,G)$, then necessarily $f\in\mathcal P(\S^d)$ by (ii). 
}  
\end{rem}

\begin{cor} Let $d\in\N$, let  $G$ denote a locally compact abelian group and let $f:[-1,1]\times G\to \mathbb C$ be a continuous function. Then
$f$ belongs to $\mathcal P(\S^d,G)$ if and only if there exists a sequence $(\mu_{n,d})_{n\ge 0}$ of finite positive Radon measures on $\widehat{G}$ with $\sum \mu_{n,d}(\widehat{G})<\infty$ such that
\begin{equation*}\label{eq:expand1}
f(\cos\theta,u)=\sum_{n=0}^\infty c_n(d,\cos\theta)\int_{\widehat{G}}(u,\gamma)\,{\rm d}\mu_{n,d}(\gamma).
\end{equation*}
The above expansion is uniformly convergent for $(\theta,u)\in [0,\pi]\times G$.
\end{cor}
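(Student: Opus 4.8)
The plan is to obtain this as an immediate consequence of Theorem~\ref{thm:main} by feeding each coefficient function through Bochner's Theorem, which is available here precisely because $G$ is assumed abelian. The only substance beyond Theorem~\ref{thm:main} is the translation of the positive definite functions $\varphi_{n,d}\in\mathcal P(G)$ into their spectral measures on $\widehat G$, together with the bookkeeping that matches the two summability conditions.

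First I would treat necessity. Suppose $f\in\mathcal P(\S^d,G)$. By Theorem~\ref{thm:main} there is a sequence $(\varphi_{n,d})_{n\ge 0}$ from $\mathcal P(G)$ with $\sum_n\varphi_{n,d}(e)<\infty$ for which the expansion \eqref{eq:expand} holds uniformly on $[0,\pi]\times G$. Since $G$ is abelian, Bochner's Theorem (recalled in Section~2) applies to each $\varphi_{n,d}$ and produces a finite positive Radon measure $\mu_{n,d}$ on $\widehat G$ with
$$
\varphi_{n,d}(u)=\int_{\widehat G}(u,\gamma)\,{\rm d}\mu_{n,d}(\gamma),\quad u\in G.
$$
Inserting this into \eqref{eq:expand} yields the asserted representation, and the uniform convergence is inherited verbatim from Theorem~\ref{thm:main}. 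The key link between the summability conditions is the identity $\varphi_{n,d}(e)=\mu_{n,d}(\widehat G)$: writing $G$ additively so that $e=0$, every character satisfies $(0,\gamma)=1$, hence $\varphi_{n,d}(e)=\int_{\widehat G}1\,{\rm d}\mu_{n,d}=\mu_{n,d}(\widehat G)$. Therefore $\sum_n\mu_{n,d}(\widehat G)=\sum_n\varphi_{n,d}(e)<\infty$, exactly the required total-mass bound.

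For the converse, given measures $(\mu_{n,d})_{n\ge 0}$ on $\widehat G$ with $\sum_n\mu_{n,d}(\widehat G)<\infty$, I would define $\varphi_{n,d}$ by the integral displayed above. The sufficiency direction of Bochner's Theorem places each $\varphi_{n,d}$ in $\mathcal P(G)$, and the same identity $\varphi_{n,d}(e)=\mu_{n,d}(\widehat G)$ gives $\sum_n\varphi_{n,d}(e)<\infty$. The hypotheses of Theorem~\ref{thm:main} are thus satisfied, so $f\in\mathcal P(\S^d,G)$. Both implications being direct corollaries of the two cited theorems, I do not expect any genuine obstacle; the only point demanding care is the elementary identity $\varphi_{n,d}(e)=\mu_{n,d}(\widehat G)$, which is what makes the two normalization conditions coincide.
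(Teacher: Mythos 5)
Your proposal is correct and is exactly the argument the paper intends: the corollary is stated without proof as an immediate consequence of Theorem~\ref{thm:main} combined with Bochner's Theorem as recalled in Section~2, and your translation $\varphi_{n,d}(e)=\mu_{n,d}(\widehat G)$ is the right bookkeeping that matches the two summability conditions. Nothing is missing.
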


When $G$ denotes the group consisting just of the neutral element, Theorem~\ref{thm:main} reduces to Schoenberg's characterization of positive definite functions on $\mathbb S^d$ as given in 
Theorem~\ref{thm:S-sphere}. 

We give the proof in the next section, but notice that the infinite series in Equation \eqref{eq:expand} is uniformly convergent by Weierstrass' M-test since
$|\varphi_{n,d}(u)|\le \varphi_{n,d}(e)$ and $|c_n(d,\cos \theta)|\le 1$ for $u\in G,\theta\in[0,\pi]$.
It is also clear that any function given by \eqref{eq:expand} belongs to $\mathcal P(\S^d,G)$ because of Proposition~\ref{thm:elem} and the fact that
 $c_n(d,\cdot)\in \mathcal P(\S^d)$ by Schoenberg's Theorem. The main point in Theorem~\ref{thm:main} is that all functions in $\mathcal P(\S^d,G)$ have an expansion 
\eqref{eq:expand}.

\begin{rem}{\rm Formula \eqref{eq:expand} with $x=\cos\theta$ can be interpreted as the expansion of $x\mapsto f(x,u)$ in the orthonormal system of Gegenbauer polynomials
$$
\left(\frac{\sigma_d}{N_n(d)\sigma_{d-1}}\right)^{-1/2} c_n(d,x), 
$$
cf. Equation \eqref{eq:orthspec}.
}
\end{rem}

In analogy with \cite[Corollary 3]{gneiting2013} and with the same proof we have the following relation between the $d$- and $(d+2)$-Schoenberg functions $\varphi_{n,d}$ and $\varphi_{n,d+2}$ for $f\in\mathcal P(\S^{d+2},G)$:

\begin{prop}\label{thm:Schfunctions} Let $d\in\N$ and suppose that $f\in\mathcal P(\S^{d+2},G)\subset \mathcal P(\S^d,G)$.
Then we have 
\begin{enumerate}
\item[(a)] For $d=1$,
\begin{equation*}\label{eq:Schcoef1}
\varphi_{0,3}=\varphi_{0,1}-\frac12\varphi_{2,1}
\end{equation*}
and
\begin{equation*}\label{eq:Schcoef2}
\varphi_{n,3}=\frac12(n+1)(\varphi_{n,1}-\varphi_{n+2,1}),\quad n\ge 1.
\end{equation*}
\item[(b)] For $d\ge 2$,
\begin{equation}\label{eq:Schcoef3}
\varphi_{n,d+2}=\frac{(n+d-1)(n+d)}{d(2n+d-1)}\varphi_{n,d}-\frac{(n+1)(n+2)}{d(2n+d+3)}\varphi_{n+2,d},\quad n\ge 0.
\end{equation}
\end{enumerate}
\end{prop}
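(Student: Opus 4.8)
The plan is to play the two Gegenbauer expansions that Theorem~\ref{thm:main} attaches to $f$ off against each other. Since $f\in\mathcal P(\S^{d+2},G)\subset\mathcal P(\S^d,G)$, Theorem~\ref{thm:main} gives, for every fixed $u\in G$, the two uniformly convergent expansions of $x\mapsto f(x,u)$,
\begin{equation*}
f(x,u)=\sum_{n=0}^\infty \varphi_{n,d}(u)\,c_n(d,x)=\sum_{n=0}^\infty \varphi_{n,d+2}(u)\,c_n(d+2,x),\qquad x\in[-1,1].
\end{equation*}
I would rewrite the first series in the orthogonal system $\{c_n(d+2,\cdot)\}$ and then identify coefficients. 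The point worth stressing is that this is \emph{exactly} the computation behind \cite[Corollary 3]{gneiting2013}: the connection formula used below acts only on the spherical variable $x=\cos\theta$, so the Schoenberg functions $\varphi_{n,d}(u)$ occupy the role played there by the constants $b_{n,d}$, and the argument transfers verbatim with $u$ a passive parameter.

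For $d\ge 2$ (so that $\lambda=(d-1)/2>0$) I would invoke the classical contiguity relation for Gegenbauer polynomials, $(n+\lambda)C_n^{(\lambda)}(x)=\lambda\bigl(C_n^{(\lambda+1)}(x)-C_{n-2}^{(\lambda+1)}(x)\bigr)$, taken at $\lambda=(d-1)/2$, so that $\lambda+1=((d+2)-1)/2$ is precisely the parameter attached to $\S^{d+2}$. Passing to the normalized polynomials through $c_n(d,x)=C_n^{((d-1)/2)}(x)\,n!/(d-1)_n$ and the value $C_n^{(\lambda)}(1)=(2\lambda)_n/n!$, a short bookkeeping with Pochhammer symbols turns this into
\begin{equation*}
c_n(d,x)=\frac{(n+d-1)(n+d)}{d(2n+d-1)}\,c_n(d+2,x)-\frac{n(n-1)}{d(2n+d-1)}\,c_{n-2}(d+2,x),\qquad n\ge 0,
\end{equation*}
the second term being absent for $n=0,1$. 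To extract (b) rigorously I would project $f(\cdot,u)$ onto $c_m(d+2,\cdot)$ against the weight $(1-x^2)^{d/2}$: interchanging sum and integral (legitimate by uniform convergence of the $d$-expansion and integrability of the weight), the inner products $\int_{-1}^1 c_n(d,x)c_m(d+2,x)(1-x^2)^{d/2}\,{\rm d}x$ are evaluated from the displayed identity together with the orthogonality relation \eqref{eq:orthspec} for $\S^{d+2}$, and they vanish unless $n=m$ or $n=m+2$. Comparing with formula \eqref{eq:coef} applied in dimension $d+2$ then yields $\varphi_{m,d+2}(u)=A_m\varphi_{m,d}(u)+B_{m+2}\varphi_{m+2,d}(u)$ with the two rational coefficients of (b).

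The case $d=1$ must be handled separately, since there $\lambda=0$ and the contiguity relation degenerates. Here I would use $c_n(1,\cdot)=T_n$ and $c_n(3,\cdot)=U_n/(n+1)$, where $T_n,U_n$ are the Chebyshev polynomials of the first and second kind, together with the elementary identities $T_0=U_0$ and $2T_n=U_n-U_{n-2}$ for $n\ge 1$ (with $U_{-1}=0$). In normalized form these give $c_0(1,x)=c_0(3,x)$ and
\begin{equation*}
c_n(1,x)=\tfrac{n+1}{2}\,c_n(3,x)-\tfrac{n-1}{2}\,c_{n-2}(3,x),\qquad n\ge 1,
\end{equation*}
and the same projection/coefficient-matching as above produces $\varphi_{0,3}=\varphi_{0,1}-\tfrac12\varphi_{2,1}$ and $\varphi_{n,3}=\tfrac12(n+1)(\varphi_{n,1}-\varphi_{n+2,1})$ for $n\ge1$, which is (a).

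No genuine analytic obstacle arises, because the connection formula touches only $x=\cos\theta$ and leaves the group variable untouched; the whole argument reduces to the uniqueness of the Gegenbauer expansion, already available from \eqref{eq:orthspec} and \eqref{eq:coef}. The only places that demand care are the faithful translation of the Gegenbauer contiguity relation into the normalization $c_n$ (correctly tracking the factors $(2\lambda)_n/n!$) and the degenerate case $d=1$, where the Chebyshev identities must stand in for the Gegenbauer relation.
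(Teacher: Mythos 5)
Your argument is correct and is essentially the paper's own: the authors prove Proposition~\ref{thm:Schfunctions} only by the remark that it follows ``with the same proof'' as Corollary 3 of \cite{gneiting2013}, which is exactly the contiguity relation $(n+\lambda)C_n^{(\lambda)}=\lambda\bigl(C_n^{(\lambda+1)}-C_{n-2}^{(\lambda+1)}\bigr)$ translated into the normalization $c_n(d,\cdot)$ and matched against the $(d+2)$-expansion, with $u$ as a passive parameter. Your normalized connection formulas and the resulting coefficients all check out, including the degenerate Chebyshev case $d=1$, and your projection against the weight $(1-x^2)^{d/2}$ cleanly justifies the coefficient identification.
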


In analogy with Proposition 4.1 in \cite{Z} we have the following result:

\begin{prop}\label{thm:Z1} Let $d\in\N$ and suppose that $f\in\mathcal P(\S^{d+2},G)$. Then $f(x,u)$ is continuously differentiable with respect to $x$ in $]-1,1[$ and
$$
\frac{\partial f(x,u)}{\partial x}=\frac{f_1(x,u)-f_2(x,u)}{1-x^2},\quad (x,u)\in]-1,1[\times G
$$
for functions $f_1,f_2\in\mathcal P(\S^d,G)$. In particular $\frac{\partial f(x,u)}{\partial x}$ is continuous
on $]-1,1[\times G$.
\end{prop}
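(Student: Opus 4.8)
The plan is to read off the $d$-dimensional expansion of $f$, differentiate it formally, and then turn the coefficient sequence into a difference of two positive definite sequences by a single substitution from Proposition~\ref{thm:Schfunctions}. First I would record the differentiation identity for the normalized Gegenbauer polynomials,
$$
(1-x^2)\frac{d}{dx}c_n(d,x)=D_n\big(c_{n-1}(d,x)-c_{n+1}(d,x)\big),\qquad D_n=\frac{n(n+d-1)}{2n+d-1},
$$
which comes from $\frac{d}{dx}C_n^{(\lambda)}=2\lambda C_{n-1}^{(\lambda+1)}$ combined with the three--term recurrence, specialized to $\lambda=(d-1)/2$ and normalized by $c_n(d,1)=1$. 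Since $f\in\mathcal P(\S^{d+2},G)\subseteq\mathcal P(\S^d,G)$, Theorem~\ref{thm:main} gives $f(\cos\t,u)=\sum_n\varphi_{n,d}(u)c_n(d,\cos\t)$, so differentiating term by term and multiplying by $1-x^2$ should produce
$$
(1-x^2)\frac{\partial f}{\partial x}(x,u)=\sum_{k\ge0}\gamma_k(u)\,c_k(d,x),\qquad \gamma_k=D_{k+1}\varphi_{k+1,d}-D_{k-1}\varphi_{k-1,d}.
$$

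The crux is to write each $\gamma_k$ as $\psi_k^{(1)}-\psi_k^{(2)}$ with $\psi_k^{(i)}\in\mathcal P(G)$ and $\sum_k\psi_k^{(i)}(e)<\infty$; note that the naive split fails because $D_k\sim k/2$ and $\sum_k k\,\varphi_{k,d}(e)$ need not converge. Writing \eqref{eq:Schcoef3} as $\varphi_{n,d+2}=P_n\varphi_{n,d}-Q_n\varphi_{n+2,d}$ with $P_n=\frac{(n+d-1)(n+d)}{d(2n+d-1)}$ and $Q_n=\frac{(n+1)(n+2)}{d(2n+d+3)}$, I would invert the $n=k-1$ instance to $\varphi_{k-1,d}=\frac1{P_{k-1}}\varphi_{k-1,d+2}+\frac{Q_{k-1}}{P_{k-1}}\varphi_{k+1,d}$ and substitute it into $\gamma_k$. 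A short computation then collapses the growing terms and yields
$$
\gamma_k=E_k\,\varphi_{k+1,d}-\frac{d(k-1)}{k+d-1}\,\varphi_{k-1,d+2},\qquad E_k=\frac{d(k+1)(2k+d-1)}{(2k+d+1)(k+d-1)},
$$
where for $d=1$ one uses Proposition~\ref{thm:Schfunctions}(a) and arrives at the same coefficients. Both $E_k$ and $d(k-1)/(k+d-1)$ are nonnegative and bounded in $k$ (each tends to $d$), so $\psi_k^{(1)}:=E_k\varphi_{k+1,d}$ and $\psi_k^{(2)}:=\frac{d(k-1)}{k+d-1}\varphi_{k-1,d+2}$ lie in $\mathcal P(G)$ and satisfy $\sum_k\psi_k^{(i)}(e)<\infty$, because $\sum_k\varphi_{k,d}(e)<\infty$ and $\sum_k\varphi_{k,d+2}(e)<\infty$. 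By Theorem~\ref{thm:main}, $f_i(x,u):=\sum_k\psi_k^{(i)}(u)c_k(d,x)$ then belong to $\mathcal P(\S^d,G)$, with $f_1-f_2=\sum_k\gamma_k(u)c_k(d,x)$.

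It remains to legitimate the formal differentiation and to identify $\sum_k\gamma_k(u)c_k(d,x)$ with $(1-x^2)\partial f/\partial x$. Since $|\psi_k^{(i)}(u)|\le\psi_k^{(i)}(e)$ and $|c_k(d,x)|\le 1$, the series $g:=f_1-f_2$ converges uniformly on $[-1,1]\times G$, hence is continuous, and $\sum_k|\gamma_k(e)|<\infty$; the latter forces $D_N\varphi_{N,d}(e)\to0$, since otherwise $\varphi_{N,d}(e)\sim c/N$ would contradict $\sum_N\varphi_{N,d}(e)<\infty$. I would then apply Abel summation to $\sum_{n\le N}\varphi_{n,d}(u)D_n(c_{n-1}-c_{n+1})$, obtaining $\sum_{k\le N-1}\gamma_k c_k$ plus two boundary terms dominated by $D_N\varphi_{N,d}(e)+D_{N-1}\varphi_{N-1,d}(e)\to0$, so these partial sums converge to $g$ uniformly on $[-1,1]\times G$. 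Dividing by $1-x^2$ shows that $\sum_n\varphi_{n,d}(u)\frac{d}{dx}c_n(d,x)$ converges to $g(x,u)/(1-x^2)$ uniformly on compact subsets of $]-1,1[\times G$; together with the uniform convergence of $\sum_n\varphi_{n,d}(u)c_n(d,x)=f$ this validates term-by-term differentiation and gives $\frac{\partial f}{\partial x}=g/(1-x^2)=(f_1-f_2)/(1-x^2)$, continuous on $]-1,1[\times G$.

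The algebraic decomposition is clean once the right substitution is spotted, so I expect the main obstacle to be this final analytic step: controlling the endpoint behaviour as $x\to\pm1$ and justifying the rearrangement/Abel summation. This is exactly the place where the hypothesis $f\in\mathcal P(\S^{d+2},G)$ is genuinely needed, through the relation \eqref{eq:Schcoef3} and the resulting boundedness of $E_k$ and $d(k-1)/(k+d-1)$ and the decay $D_N\varphi_{N,d}(e)\to0$, rather than merely $f\in\mathcal P(\S^d,G)$.
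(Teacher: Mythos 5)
Your proposal is correct and follows essentially the same route as the paper: the same two Gegenbauer identities giving $(1-x^2)c_n'(d,x)=\frac{n(n+d-1)}{2n+d-1}(c_{n-1}(d,x)-c_{n+1}(d,x))$, the same Abel summation, the same substitution of \eqref{eq:Schcoef3} producing exactly the coefficients $\frac{d(n+2)(2n+d+1)}{(2n+d+3)(n+d)}$ and $\frac{dn}{n+d}$ that appear in the paper, and the same reliance on $n\varphi_{n,d}(e)\to 0$ to kill the boundary terms. Your final analytic justification is in fact spelled out in slightly more detail than the paper's (which defers the decay of $n\varphi_{n,d}(e)$ to Ziegel), but the argument is the same.
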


\begin{cor}\label{thm:Z2} Let $d\in\N$ and $f\in\mathcal P(\S^d,G)$. Then $\frac{\partial^n f(x,u)}{\partial x^n}$ exists and is conti\-nu\-ous on $]-1,1[\times G$ for $n\le [(d-1)/2]$. 
\end{cor}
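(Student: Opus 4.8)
The plan is to prove Corollary~\ref{thm:Z2} by induction on the order $n$ of differentiation, with Proposition~\ref{thm:Z1} serving as the single engine that trades two units of sphere dimension for one derivative. Fixing $G$, I would formulate for each $n\ge 0$, simultaneously over all $d\in\N$, the (slightly strengthened) claim $P(n)$: for every $f\in\mathcal P(\S^d,G)$ with $n\le[(d-1)/2]$, the partial derivatives $\partial^k f/\partial x^k$ exist and are continuous on $]-1,1[\times G$ for all $0\le k\le n$. This asserts joint continuity of all lower-order derivatives as well, which makes the induction self-contained. The base case $P(0)$ is immediate: $f\in\mathcal P(\S^d,G)$ is continuous by definition, and $[(d-1)/2]=0$ exactly for $d\in\{1,2\}$, where nothing beyond continuity is claimed.

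For the inductive step $P(n-1)\Rightarrow P(n)$ with $n\ge 1$, take $f\in\mathcal P(\S^d,G)$ with $n\le[(d-1)/2]$. Then $[(d-1)/2]\ge 1$ forces $d\ge 3$, so $f\in\mathcal P(\S^{(d-2)+2},G)$ and Proposition~\ref{thm:Z1}, applied with $d$ replaced by $d-2$, yields
$$
\frac{\partial f(x,u)}{\partial x}=\frac{f_1(x,u)-f_2(x,u)}{1-x^2},\quad (x,u)\in]-1,1[\times G,
$$
with $f_1,f_2\in\mathcal P(\S^{d-2},G)$. It then suffices to show that the right-hand side is $(n-1)$-times continuously differentiable in $x$, with all these derivatives jointly continuous in $(x,u)$; for then $\partial^n f/\partial x^n=\partial^{n-1}/\partial x^{n-1}\bigl(\partial f/\partial x\bigr)$ exists and is continuous, and together with the already-known lower orders this gives $P(n)$.

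To control the right-hand side I would invoke $P(n-1)$ for $f_1,f_2\in\mathcal P(\S^{d-2},G)$, which is legitimate precisely because $n-1\le[((d-2)-1)/2]=[(d-3)/2]$. This inequality holds since $[(d-3)/2]=[(d-1)/2]-1\ge n-1$, using that subtracting the integer $1$ commutes with the floor. Hence $f_1,f_2$ have jointly continuous $x$-derivatives up to order $n-1$. Since $x\mapsto(1-x^2)^{-1}$ is $C^\infty$ on $]-1,1[$ and depends on $x$ alone, the Leibniz rule shows $(f_1-f_2)/(1-x^2)$ is $(n-1)$-times continuously differentiable in $x$; each such derivative is a finite sum of products of a jointly continuous function on $]-1,1[\times G$ (a derivative of $f_1-f_2$) with a function of $x$ alone (a derivative of $(1-x^2)^{-1}$), and is therefore jointly continuous. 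This closes the induction.

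The genuine analytic content sits entirely in Proposition~\ref{thm:Z1}, where the identity for $\partial f/\partial x$ and its joint continuity are established; granting that, the corollary is essentially bookkeeping. The only point demanding care is the index accounting, namely that one differentiation consumes exactly one unit of the budget $[(d-1)/2]$ while lowering the sphere dimension by two, together with the observation that the singular factor $(1-x^2)^{-1}$ remains smooth on the open interval and so never obstructs repeated differentiation away from the endpoints $\pm1$.
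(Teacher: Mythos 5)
Your proof is correct and is essentially the argument the paper intends: the corollary is stated without proof precisely because it follows by iterating Proposition~\ref{thm:Z1}, trading two units of dimension for one derivative, with the factor $(1-x^2)^{-1}$ harmless on the open interval. Your careful index bookkeeping ($[(d-3)/2]=[(d-1)/2]-1$) and the strengthened induction hypothesis ensuring joint continuity at every stage are exactly the details one would supply.
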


Our second main result is the following extension of Schoenberg's second Theorem:

\begin{thm}\label{thm:main2} Let $G$ denote a locally compact group and let 
$f:[-1,1]\times G\to \mathbb C$ be a continuous function. Then
$f$ belongs to $\mathcal P(\S^\infty,G)$ if and only if there exists
 a sequence $(\varphi_{n})_{n\ge 0}$ from $\mathcal P(G) $ with $\sum \varphi_{n}(e)<\infty$ 
such that
\begin{equation}\label{eq:expandp}
f(\cos\theta,u)=\sum_{n=0}^\infty \varphi_{n}(u) \cos^n\theta.
\end{equation}
The above expansion is uniformly convergent for $(\theta,u)\in [0,\pi]\times G$.

Moreover,
\begin{equation}\label{eq:expandpp}
\lim_{d\to\infty} \varphi_{n,d}(u)=\varphi_n(u) \;\mbox{for all}\; (n,u)\in\N_0\times G.
\end{equation}
\end{thm}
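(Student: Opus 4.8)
The plan is to prove both implications, the forward one being routine and the converse containing the real work, namely an adaptation of Schoenberg's passage from $\mathcal P(\S^d)$ to $\mathcal P(\S^\infty)$, with the added feature that the coefficients are now functions on $G$.

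For the sufficiency I would first observe that the monomial kernel $(\xi,\eta)\mapsto(\xi\cdot\eta)^n$ lies in $\mathcal P(\S^d)$ for every $d$: indeed $\xi\cdot\eta=c_1(d,\xi\cdot\eta)\in\mathcal P(\S^d)$, and by Proposition~\ref{thm:elem}(i) products of positive definite kernels are again positive definite, so $(\xi\cdot\eta)^n\in\mathcal P(\S^d)$ and hence $\cos^n\theta\in\mathcal P(\S^\infty)$. Given $\varphi_n\in\mathcal P(G)$ with $\sum_n\varphi_n(e)<\infty$, Proposition~\ref{thm:elem}(iv) yields $\cos^n(\cdot)\otimes\varphi_n\in\mathcal P(\S^d,G)$ for every $d$; finite partial sums remain in the class by (i), and since $|\varphi_n(u)\cos^n\theta|\le\varphi_n(e)$ the series in \eqref{eq:expandp} converges uniformly by the Weierstrass M-test to the continuous $f$, which therefore lies in $\mathcal P(\S^d,G)$ by (ii). As $d$ is arbitrary, $f\in\mathcal P(\S^\infty,G)$.

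For the necessity, fix $f\in\mathcal P(\S^\infty,G)=\bigcap_d\mathcal P(\S^d,G)$. For each $d$ Theorem~\ref{thm:main} gives $f(\cos\theta,u)=\sum_n\varphi_{n,d}(u)c_n(d,\cos\theta)$ with $\varphi_{n,d}\in\mathcal P(G)$; evaluating at $\theta=0,u=e$ produces the crucial identity $\sum_n\varphi_{n,d}(e)=f(1,e)=:M$, \emph{independent of $d$}, together with $|\varphi_{n,d}(u)|\le\varphi_{n,d}(e)$. Since $f\in\mathcal P(\S^d,G)$ for all $d$, Corollary~\ref{thm:Z2} shows $x\mapsto f(x,u)$ is $C^\infty$ on $]-1,1[$ with $\partial_x^n f$ continuous on $]-1,1[\times G$, so I would take $\varphi_n(u):=\tfrac1{n!}\partial_x^n f(0,u)$ as the candidate coefficients. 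I would next settle the scalar slice $u=e$: as $f(\cdot,e)\in\mathcal P(\S^\infty)$, Schoenberg's second theorem \eqref{eq:Sch4} gives $f(x,e)=\sum_n b_n x^n$ with $b_n\ge0$, and $x=1$ yields $\sum_n b_n=M$. Writing each monomial in ultraspherical polynomials, $x^m=\sum_{k\le m}g_{m,k}(d)c_k(d,x)$, Bingham's theorem \cite{Bi} gives $g_{m,k}(d)\ge0$, while $x=1$ forces $\sum_k g_{m,k}(d)=1$, hence $0\le g_{m,k}(d)\le1$; moreover the elementary limit $c_k(d,x)\to x^k$ as $d\to\infty$ makes $g_{m,k}(d)\to\delta_{m,k}$. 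By uniqueness of Gegenbauer coefficients $\varphi_{k,d}(e)=\sum_{m\ge k}b_m g_{m,k}(d)$, so dominated convergence (summable dominator $b_m$) gives $\varphi_{k,d}(e)\to b_k$, and Scheffé's lemma upgrades this to $\sum_n|\varphi_{n,d}(e)-b_n|\to0$.

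For general $u$ the sequence $(\varphi_{n,d}(u))_d$ is bounded by $M$, so along any subsequence I can extract, diagonally in $n$, a further subsequence with $\varphi_{n,d}(u)\to\psi_n(u)$ for all $n$. The tail bound $\sum_{n>N}|\varphi_{n,d}(u)c_n(d,x)|\le\sum_{n>N}\varphi_{n,d}(e)$, which is small uniformly in the limit because $\sum_n|\varphi_{n,d}(e)-b_n|\to0$ and $\sum_n b_n<\infty$, permits termwise passage to the limit in $f(x,u)=\sum_n\varphi_{n,d}(u)c_n(d,x)$, giving $f(x,u)=\sum_n\psi_n(u)x^n$ on $[-1,1]$. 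Uniqueness of power-series (Taylor) coefficients then forces $\psi_n(u)=\tfrac1{n!}\partial_x^n f(0,u)=\varphi_n(u)$ regardless of the subsequence, so $\lim_d\varphi_{n,d}(u)=\varphi_n(u)$, which is \eqref{eq:expandpp}; each $\varphi_n$ is a pointwise limit of positive definite functions, hence positive definite, and continuous, so $\varphi_n\in\mathcal P(G)$, with $\sum_n\varphi_n(e)=\sum_n b_n=M<\infty$ and uniform convergence of \eqref{eq:expandp} again by the M-test. The main obstacle throughout is the interchange of the two limiting operations---the sum over $n$ and the limit $d\to\infty$---i.e. ruling out escape of mass into high-degree harmonics; the identity $\sum_n\varphi_{n,d}(e)=f(1,e)$ valid for every $d$ is precisely the tightness that, combined with the scalar second theorem and Scheffé's lemma, converts into the uniform tail control needed to close the argument.
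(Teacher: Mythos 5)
Your argument is correct, and it shares the paper's skeleton: sufficiency via Proposition~\ref{thm:elem}, identification of the candidate limit $\varphi_n(u)=\partial_x^nf(0,u)/n!$ through Corollary~\ref{thm:Z2}, and a compactness-plus-uniqueness argument that converts subsequential convergence into \eqref{eq:expandpp}. The genuine difference lies in how you justify interchanging the sum over $n$ with the limit $d\to\infty$. The paper invokes Schoenberg's Lemma~1 (Lemma~\ref{thm:Schtech}), which for fixed $x\in\,]-1,1[$ makes $|c_n(d,x)-x^n|<\varepsilon$ \emph{uniformly in} $n$ once $d$ is large; combined with $\sum_n\varphi_{n,d}(e)=f(1,e)$ and the geometric decay of $|x|^n$ this yields the three-term estimate directly, with no information needed about where the mass $\varphi_{n,d}(e)$ sits. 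You instead first settle the scalar slice $u=e$ using Schoenberg's second theorem together with Bingham's nonnegative, row-stochastic connection coefficients, obtaining $\varphi_{n,d}(e)\to b_n$ and then, via Scheff\'e, $\sum_n|\varphi_{n,d}(e)-b_n|\to0$; this $\ell^1$-convergence is your substitute for the uniform Gegenbauer estimate and supplies the uniform tail bound on $\sum_{n>N}\varphi_{n,d}(e)$. Your route costs an extra input (the classical second theorem of Schoenberg plus the connection-coefficient computation, essentially the $G=\{e\}$ case of Theorem~\ref{thm:infty-d}), but it buys the stronger conclusion $\sum_n|\varphi_{n,d}(e)-\varphi_n(e)|\to0$, which the paper's proof does not record; it also replaces the Tychonoff net argument on the product space $X^{\N_0\times G}$ by an elementary diagonal extraction at each fixed $u$, which suffices because the limit is pinned down pointwise by the Taylor coefficients.
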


The proof is given in Section 4.

\begin{rem} {\rm In the special case $G=\{e\}$ where $\mathcal P(\S^d,G)=\mathcal P(\S^d)$, the Equation \eqref{eq:expandpp} states that the $d$-Schoenberg coefficients $b_{n,d}$ converge to $b_n$
from \eqref{eq:Sch4} when $d\to\infty$. This observation is apparently not noticed by Schoenberg, who only obtains $b_n$ by a "Cantor-diagonal process". Our proof uses Corollary~\ref{thm:Z2}
based on Ziegel's result that $f\in\mathcal P(\S^{\infty})$ is $C^\infty$ on the open interval $]-1,1[$, a result that Schoenberg did not know.
}
\end{rem}

\section{Proofs}

\begin{lemma}\label{thm:bd} Let $d\in\N\cup \{\infty\}$. Any $f\in\mathcal P(\S^d,G)$ satisfies
$$
f(x,u^{-1})=\overline{f(x,u)},\quad |f(x,u)|\le f(1,e),\quad (x,u)\in[-1,1]\times G.
$$
\end{lemma}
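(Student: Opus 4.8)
The plan is to imitate the classical derivation of the two basic properties of a positive definite function on a group---namely $\varphi(u^{-1})=\overline{\varphi(u)}$ and $|\varphi(u)|\le\varphi(e)$---by specializing the positive semidefiniteness of the kernel \eqref{eq:pd} to configurations of only one or two points, chosen so that the inner products $\xi_k\cdot\xi_l$ and the group elements $u_k^{-1}u_l$ realize the desired arguments of $f$. Since $\mathcal P(\S^\infty,G)\subseteq\mathcal P(\S^1,G)$ by definition, it suffices to treat $d\in\N$; in fact all the point configurations I need already lie on a great circle $\S^1\subseteq\S^d$.

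First I would record the nonnegativity of $f(1,e)$: taking $n=1$ and a single point $(\xi,u)$, the $1\times 1$ matrix $[f(\xi\cdot\xi,u^{-1}u)]=[f(1,e)]$ is positive semidefinite, so $f(1,e)\ge 0$ is real. Next, for given $x\in[-1,1]$ and $u\in G$, I would set $\t=\arccos x$ and take the two points $(\xi_1,u_1)=(\xi,e)$ and $(\xi_2,u_2)=(\eta,u)$, where $\xi=(1,0,\dots,0)$ and $\eta=(\cos\t,\sin\t,0,\dots,0)$ lie in $\S^d$, so that $\xi\cdot\eta=x$, $u_1^{-1}u_2=u$, and $u_2^{-1}u_1=u^{-1}$. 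The associated $2\times 2$ matrix from \eqref{eq:pd} is
$$
\begin{pmatrix} f(1,e) & f(x,u)\\ f(x,u^{-1}) & f(1,e)\end{pmatrix},
$$
and positive semidefiniteness forces it to be hermitian, whence $f(x,u^{-1})=\overline{f(x,u)}$. As $x\in[-1,1]$ and $u\in G$ were arbitrary, this is the first assertion.

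The bound then follows from the very same matrix. A hermitian positive semidefinite $2\times 2$ matrix has nonnegative determinant, so, using the symmetry just established,
$$
f(1,e)^2-f(x,u)f(x,u^{-1})=f(1,e)^2-|f(x,u)|^2\ge 0;
$$
together with $f(1,e)\ge 0$ this yields $|f(x,u)|\le f(1,e)$.

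There is essentially no serious obstacle here: the only point requiring a word is the elementary geometric fact that every $x\in[-1,1]$ is realized as an inner product $\xi\cdot\eta$ of two unit vectors, which is immediate for $d\ge 1$ through the explicit $\xi,\eta$ above (and which covers the case $d=\infty$ via the inclusion $\mathcal P(\S^\infty,G)\subseteq\mathcal P(\S^1,G)$). The rest is just the standard $n=1,2$ specializations of the positive semidefinite matrix condition.
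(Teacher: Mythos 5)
Your proof is correct and follows essentially the same route as the paper: specialize the positive semidefiniteness of \eqref{eq:pd} to a two-point configuration realizing $\xi\cdot\eta=x$ and group elements $e,u$, then read off hermitian symmetry and the nonnegativity of the determinant of the resulting $2\times 2$ matrix. Your extra remarks (the $1\times1$ case giving $f(1,e)\ge 0$, the explicit choice of $\xi,\eta$, and the reduction of $d=\infty$ to finite $d$) are details the paper leaves implicit but add nothing essentially different.
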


\begin{proof} Given $(x,u)\in [-1,1]\times G$ we choose vectors $\xi,\eta\in\mathbb S^d$ such that $\xi\cdot \eta=x$. We next apply \eqref{eq:pd} for $n=2$ and the pair of points
$(\xi,u),(\eta,e)$ giving that the matrix
$$
\begin{pmatrix}
f(1,e) & f(x,u)\\
f(x,u^{-1})  & f(1,e)
\end{pmatrix}
$$
is positive semidefinite, in particular hermitian and having a nonnegative determinant. The result follows.
\end{proof}

In the following two lemmas we assume $d\in\N$.

\begin{lemma}\label{thm:top} Let $K\subset G$ be a non-empty compact set, and let $\delta>0$ and an open neighbourhood $U$ of $e\in G$ be given. Then there exists a partition
of $\mathbb S^d\times K$ in finitely many non-empty disjoint Borel sets, say $M_j,j=1,\ldots,r$, with the property:
$$
\mbox{For\;} (\xi,u),(\eta,v)\in M_j \mbox{\;one has\;\;} \theta(\xi,\eta)<\delta,\;u^{-1}v\in U.
$$ 
\end{lemma}

\begin{proof} By \cite[Theorem 45.1]{Mu} there exists a finite covering of the compact metric space
$(\mathbb S^d,\theta)$ by  open balls $B(\xi_j,\delta/2),\xi_j\in \mathbb S^d$. Defining $B_1=B(\xi_1,\delta/2)$ and 
$$
B_j=B(\xi_j,\delta/2)\setminus \cup_{k=1}^{j-1} B(\xi_k,\delta/2),\; j\ge 2,
$$
the non-empty sets among $B_j$ will form a finite partition $S_1,\ldots,S_p$ of $\mathbb S^d$ such that for $\xi,\eta\in S_j$ we have $\theta(\xi,\eta)<\delta$. 

Given an open neighbourhood $U$ of $e\in G$, there exists a smaller open neighbourhood $V$ of $e\in G$ such that $V^{-1}V\subset U$. By the definition of compactness, see \cite[p. 164]{Mu}, there exists a finite covering of  the compact set $K$ by left translates $u_jV$, and from this we can as above produce a partition of $K$ in finitely many non-empty disjoint Borel sets, say $T_1,\ldots,T_q$, such that each
$T_k$ is contained in a left translate of $V$, and therefore we have
\begin{equation*}\label{eq:tech}
 u^{-1}v\in V^{-1}V\subset U,\quad  u,v\in T_k, k=1,\ldots,q.
\end{equation*}

Finally the sets $M_{j,k}:=S_j\times T_k$ will form a finite partition of $\mathbb S^d\times K$ with the desired properties.
\end{proof}

\begin{lemma}\label{thm:equi} For a continuous function $f:[-1,1]\times G\to \mathbb C$ the following are equivalent:
\begin{enumerate}
\item[{\rm(i)}] $f \in \mathcal P(\S^d,G)$.

\item[{\rm(ii)}] $f$ is bounded and for any complex Radon measure $\mu$ on $\mathbb S^d\times G$ of compact support we have
\begin{equation}\label{eq:mu}
\int_{\mathbb S^d\times G}\int_{\mathbb S^d\times G} f(\cos \theta(\xi,\eta),u^{-1}v)\,{\rm d}\mu(\xi,u)\,
{\rm d}\overline{\mu(\eta,v)}\ge 0.
\end{equation} 
\end{enumerate}
\end{lemma}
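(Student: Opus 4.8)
The plan is to prove the two implications separately, with the direction (ii)$\Rightarrow$(i) being straightforward and the direction (i)$\Rightarrow$(ii) requiring the discretization machinery set up in Lemmas \ref{thm:bd} and \ref{thm:top}.

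First I would treat the easy direction (ii)$\Rightarrow$(i). Assume $f$ is bounded and satisfies \eqref{eq:mu} for all compactly supported complex Radon measures. Given points $(\xi_1,u_1),\ldots,(\xi_n,u_n)\in\mathbb S^d\times G$ and scalars $a_1,\ldots,a_n\in\C$, I would take $\mu=\sum_{k=1}^n a_k\,\delta_{(\xi_k,u_k)}$, a finitely supported (hence compactly supported) measure. Substituting into \eqref{eq:mu}, the double integral collapses to the finite double sum $\sum_{k,l} f(\cos\theta(\xi_k,\xi_l),u_k^{-1}u_l)\,a_k\overline{a_l}\ge 0$, which is exactly the defining positive-semidefiniteness condition \eqref{eq:pd}. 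Thus $f\in\mathcal P(\S^d,G)$.

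The substantial direction is (i)$\Rightarrow$(ii). Boundedness of $f$ is immediate from Lemma \ref{thm:bd}, which gives $|f(x,u)|\le f(1,e)$. For the integral inequality, the strategy is to approximate the measure $\mu$ by discrete measures so that the double integral in \eqref{eq:mu} is approximated by Riemann-type sums of the form \eqref{eq:pd}, which are nonnegative by hypothesis. Concretely, let $\mu$ have compact support contained in $\mathbb S^d\times K$ for some compact $K\subset G$. Given $\eps>0$, uniform continuity of $f$ on the compact set $[-1,1]\times(K^{-1}K)$ lets me choose $\delta>0$ and a neighbourhood $U$ of $e$ so that $\theta(\xi,\eta)<\delta$ and $u^{-1}v\in U$ force $|f(\cos\theta(\xi,\eta),u^{-1}v)-f(\cos\theta(\xi',\eta'),{u'}^{-1}v')|<\eps$ whenever the primed points lie in the same partition cells. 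I would then invoke Lemma \ref{thm:top} to obtain a finite Borel partition $M_1,\ldots,M_r$ of $\mathbb S^d\times K$ with the stated oscillation control, pick a representative point $(\xi_j,u_j)\in M_j$ in each cell, and set $a_j=\mu(M_j)$. Replacing the integrand $f(\cos\theta(\xi,\eta),u^{-1}v)$ by its value $f(\cos\theta(\xi_j,\xi_k),u_j^{-1}u_k)$ on each product cell $M_j\times M_k$ produces the finite sum $\sum_{j,k} f(\cos\theta(\xi_j,\xi_k),u_j^{-1}u_k)\,a_j\overline{a_k}$, which is $\ge 0$ by (i), while the error incurred is controlled by $\eps$ times $\|\mu\|^2$ (the total variation squared). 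Letting $\eps\to 0$ yields \eqref{eq:mu}.

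The main obstacle I anticipate is making the error estimate in the last step fully rigorous, since the measure $\mu$ is complex-valued and the double integral involves $\mathrm d\mu\otimes\mathrm d\overline{\mu}$: one must bound the difference between the true integral and the discretized sum uniformly over both variables simultaneously, keeping in mind that the integrand depends on the \emph{pair} of points through $u^{-1}v$, not on each separately. The key technical point is that Lemma \ref{thm:top} provides precisely the joint control needed, namely a single partition whose cells simultaneously cluster the $\mathbb S^d$-coordinates (so the geodesic distances are small) and the $G$-coordinates (so that $u^{-1}v$ stays in $U$ for points in the same cell), which is exactly what allows the oscillation of $f$ across any product cell $M_j\times M_k$ to be bounded by $\eps$. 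This careful pairing of the two approximation scales is the crucial idea, mirroring Proposition 13.4.4 in \cite{D}.
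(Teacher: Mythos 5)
Your proposal is correct and follows essentially the same route as the paper: the easy direction by specializing $\mu$ to a finite sum of point masses, and the hard direction by combining boundedness from Lemma~\ref{thm:bd}, uniform continuity of the kernel on $(\S^d\times K)^2$, the joint partition from Lemma~\ref{thm:top}, representative points with weights $\mu(M_j)$, and the error bound $\eps\|\mu\|^2$. The only slip is notational: the uniform-continuity condition should be stated in terms of the closeness of the two quadruples (e.g.\ $\theta(\xi,\xi')<\delta$, $u^{-1}u'\in U$, etc.), not of $\theta(\xi,\eta)$ and $u^{-1}v$ themselves, but your subsequent clause about points lying in the same cells shows you intend exactly the paper's argument.
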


\begin{proof} "(i)$\implies $(ii)." Suppose first that (i) holds. By Lemma~\ref{thm:bd} we know that $f$ is bounded, and for any discrete complex Radon measure of the form
$$
\sigma=\sum_{j=1}^n \alpha_j\delta_{(\xi_j,u_j)},
$$
where $(\xi_1,u_1),\ldots (\xi_n,u_n)\in \mathbb S^d\times G$, $\alpha_1,\ldots, \alpha_n\in\mathbb C$, we have
\begin{eqnarray*}
\lefteqn{\int_{\mathbb S^d\times G}\int_{\mathbb S^d\times G} f(\cos \theta(\xi,\eta),u^{-1}v)\,{\rm d}\sigma(\xi,u)\,{\rm d}\overline{\sigma(\eta,v)}} \\
&=& \sum_{k,l=1}^n f(\cos \theta(\xi_k,\xi_l),u_k^{-1}u_l)\alpha_k\overline{\alpha_{l}}\ge 0.
\end{eqnarray*}

Let now $\mu$ denote an arbitrary complex Radon measure on $\mathbb S^d\times G$ with compact support. We may therefore suppose that the support of $\mu$ is contained in $\mathbb  S^d\times K$ for a compact set $K\subset G$. Let $M_j,j=1,\ldots,r$ be a partition of $\mathbb S^d\times K$ corresponding to given $\delta>0$ and open $U$ as in Lemma~\ref{thm:top}.

Let us now consider the number
$$
I:=\int_{\mathbb S^d\times G}\int_{\mathbb S^d\times G} f(\cos \theta(\xi,\eta),u^{-1}v)\,{\rm d}\mu(\xi,u)\,
{\rm d}\overline{\mu(\eta,v)},
$$
which is clearly real. We shall prove that $I\ge 0$, by showing that for  any $\varepsilon >0$ there
exists $J\ge0$ such that $|I-J|<\varepsilon||\mu||^2$, where $||\mu||$ is the total variation of the complex measure $\mu$, cf. \cite{R1}.

First of all
 $f(\cos\theta(\xi,\eta),u^{-1}v)$ is uniformly continuous on the compact set $(\mathbb S^d\times K)^2$, and for given $\varepsilon>0$ we therefore know that there exists
$\delta>0$ and an open neighbourhood $U$ of $e\in G$ such that we for all quadruples
 $(\xi,u, \eta,v),(\tilde\xi,\tilde u,\tilde\eta,\tilde v)\in (\mathbb S^d\times K)^2$ satisfying
\begin{eqnarray*}
\theta(\xi,\tilde\xi)<\delta,\;\theta(\eta,\tilde\eta)<\delta,\; u^{-1}\tilde u\in U,\; v^{-1}\tilde v\in U
\end{eqnarray*}
have
$$
 |f(\cos \theta(\xi,\eta),u^{-1}v)-f(\cos \theta(\tilde\xi,\tilde\eta),\tilde u^{-1}\tilde v)|<\varepsilon,
$$
in particular
\begin{eqnarray*}
|f(\cos \theta(\xi,\eta),u^{-1}v)-f(\cos \theta(\tilde\xi,\tilde\eta),\tilde u^{- 1}\tilde v)|<\varepsilon.
\end{eqnarray*}
if 
$$
(\xi,u, \eta,v),(\tilde\xi,\tilde u,\tilde\eta,\tilde v)\in M_k\times M_l.
$$

Next
$$
I=\sum_{k,l=1}^r\int_{M_k}\int_{M_l} f(\cos \theta(\xi,\eta),u^{-1}v)\,{\rm d}\mu(\xi,u)\,
{\rm d}\overline{\mu(\eta,v)},
$$
and if we choose an arbitrary point $(\xi_j,u_j)\in M_j$ and define $\alpha_j=\mu(M_j)$, $j=1,\ldots,r$, then 
$$
J:=\sum_{k,l=1}^r f(\cos \theta(\xi_k,\xi_l),u_k^{-1}u_l)\alpha_k\overline{\alpha_l}\ge 0.
$$
Furthermore,
$$
I-J=\sum_{k,l=1}^r\int_{M_k}\int_{M_l} [f(\cos \theta(\xi,\eta),u^{-1}v)
-f(\cos \theta(\xi_k,\xi_l),u_k^{-1}u_l)]
\,{\rm d}\mu(\xi,u)\,
{\rm d}\overline{\mu(\eta,v)},
$$
and  by the uniform continuity
$$
|I-J|\le \varepsilon\sum_{k,l=1}^r |\mu|(M_k)|\mu|(M_l)= \varepsilon ||\mu||^2,
$$
where $|\mu|$ denotes the total variation measure and $||\mu||$ its total mass called the total variation of $\mu$.

\medskip  

The implication  "(ii)$\implies$(i)" is easy by specializing the  measure $\mu$ to a complex discrete
measure concentrated in finitely many points. 
\end{proof}

\begin{rem}\label{thm:rem1} {\rm In Lemma~\ref{thm:equi} it is possible to add the following equivalent condition:

(iii) $f$ is bounded and for any bounded complex Radon measure $\mu$ on $\mathbb S^d\times G$ we have
\begin{equation}\label{eq:mub}
\int_{\S^d\times G}\int_{\S^d\times G} f(\cos \theta(\xi,\eta),u^{-1}v)\,{\rm d}\mu(\xi,u)\,
{\rm d}\overline{\mu(\eta,v)}\ge 0.
\end{equation} 
The equivalence follows by approximating $\mu$ with measures of compact support. Since the result is not needed, we leave the details to the reader.}
\end{rem}

{\it Proof of Theorem~\ref{thm:main}.} Suppose that $f$ belongs to $\mathcal P(\S^d,G)$ and let us consider the product measure $\mu:=\omega_d\otimes \sigma$ on $\mathbb S^d\times G$,
where $\sigma$ is an arbitrary complex Radon measure on $G$ of compact support. By Lemma~\ref{thm:equi} applied to $\mu$ we get
\begin{equation}\label{eq:help1}
\int_{\S^{d}}\int_{\S^{d}}\int_G \int_G f(\cos \theta(\xi,\eta),u^{-1}v)\,{\rm d}\omega_{d}(\xi)\,{\rm d}\omega_{d}(\eta)\,{\rm d}\sigma(u)\,
{\rm d}\overline{\sigma(v)}\ge 0.
\end{equation}
The  integral with respect to $\xi,\eta$ can be simplified to
$$
\int_{\S^{d}}\int_{\S^{d}} f(\cos \theta(\xi,\eta),u^{-1}v)\,{\rm d}\omega_{d}(\xi)\,{\rm d}\omega_{d}(\eta)=\sigma_{d}\int_{\S^{d}}
f(\varepsilon_1\cdot\eta,u^{-1}v)\,{\rm d}\omega_d(\eta),
$$
where $\varepsilon_1$ is the unit vector $(1,0,\ldots,0) \in \R^{d+1}$.
This is because
$$
\int_{\S^d}f(\xi\cdot\eta,u^{-1}v)\,{\rm d}\omega_d(\eta)
$$
is independent of $\xi\in\S^d$ due to the rotation invariance of $\omega_d$, so we can put $\xi=\varepsilon_1$. When we next integrate this constant function with respect to $\xi$, we just multiply the function with the total mass $\sigma_d$.  

Therefore Equation \eqref{eq:help1}
amounts  to
\begin{equation}\label{eq:help2}
\int_{\S^d}\int_G \int_G f(\varepsilon_1\cdot\eta,u^{-1}v)\,{\rm d}\omega_d(\eta)\,{\rm d}\sigma(u)\,
{\rm d}\overline{\sigma(v)}\ge 0.
\end{equation}
 
We next apply Equation \eqref{eq:help2} to the function $f(x,u)c_n(d,x)$ which belongs to $\mathcal P(\S^d,G)$
by Proposition~\ref{thm:elem}.
This gives
\begin{equation}\label{eq:help3}
\int_{\S^d}\int_G \int_G f(\varepsilon_1\cdot\eta,u^{-1}v)
c_n(d,\varepsilon_1\cdot\eta)\,{\rm d}\omega_d(\eta)\,{\rm d}\sigma(u)\,
{\rm d}\overline{\sigma(v)}\ge 0.
\end{equation}
The function $\varphi_{n,d}:G\to\mathbb C$ defined by
\begin{equation}\label{eq:help4}
\varphi_{n,d}(u):=\frac{N_n(d)}{\sigma_d} \int_{\S^d} f(\varepsilon_1\cdot\eta,u)
c_n(d,\varepsilon_1\cdot\eta)\,{\rm d}\omega_d(\eta)
\end{equation}
is clearly continuous and bounded, and it is positive definite on $G$, because Equation \eqref{eq:help3} holds for all complex Radon measures $\sigma$ on $G$ with compact support. The integral \eqref{eq:help4}
can be reduced to 
\begin{equation}\label{eq:help5}
\varphi_{n,d}(u)=\frac{N_n(d)\sigma_{d-1}}{\sigma_d}\int_{-1}^1 f(x,u)c_n(d,x)(1-x^2)^{d/2-1}\,{\rm d}x,
\end{equation}
cf. \cite[p.1]{M}.

For each $u\in G$ the function $\xi\mapsto f(\varepsilon_1\cdot\xi,u)$ belongs to ${\cal C}(\mathbb S^d)$ and has by \eqref{eq:exp} and \eqref{eq:proj}  an expansion in spherical harmonics

$$
f(\varepsilon_1\cdot\xi,u)=\sum_{n=0}^\infty S_n(\xi,u) 
$$
known to be Abel-summable to the  left-hand side for each $u\in G$, cf. \cite[Theorem 9]{M},
and
$$
S_n(\xi,u)=\frac{N_n(d)}{\sigma_d}\int_{\S^d}c_n(d,\xi\cdot\eta)f(\varepsilon_1\cdot\eta,u)\,{\rm d}\omega_d(\eta).
$$
By the Funk-Hecke formula \cite[p. 20]{M} and Equation \eqref{eq:help5} we get
$$
S_n(\xi,u)=\varphi_{n,d}(u)c_n(d,\varepsilon_1\cdot\xi),
$$
hence
\begin{equation*}\label{eq:help6}
f(\varepsilon_1\cdot\xi,u)=\sum_{n=0}^\infty \varphi_{n,d}(u)c_n(d,\varepsilon_1\cdot\xi),
\end{equation*}
and the series is Abel-summable to the left-hand side for each $u\in G$ and each $\xi\in\mathbb S^d$. For $u=e$ and $\xi=\varepsilon_1$ we have in particular
$$
\lim_{r\to 1^-}\sum_{n=0}^\infty r^n\varphi_{n,d}(e)=f(1,e),
$$  
but since $\varphi_{n,d}(e)\ge 0$, the limit above is equal to
 $\sum_{n=0}^\infty \varphi_{n,d}(e)$ by Lebesgue's monotonicity Theorem, hence
$$
\sum_{n=0}^\infty \varphi_{n,d}(e)=f(1,e)<\infty.
$$
Since
$$
|\varphi_{n,d}(u)c_n(d,x)|\le \varphi_{n,d}(e),
$$
the M-test of Weierstrass shows that the series 
\begin{equation}\label{eq:help7}
\sum_{n=0}^\infty \varphi_{n,d}(u) c_n(d,x),\quad (x,u)\in[-1,1]\times G,
\end{equation}
converges uniformly to a continuous function $\tilde f(x,u)$. By Abel's Theorem the series in \eqref{eq:help7} is also Abel summable with sum $\tilde f(x,u)$ and therefore
 $f(x,u)=\tilde f(x,u)$ for all $x\in[-1,1],u\in G$, and 
Theorem~\ref{thm:main} is proved. $\quad\square$

\medskip

{\it Proof of Proposition~\ref{thm:Z1}:} Although the proof is very similar to the proof given in \cite{Z}, we shall indicate the main steps, where it is crucial that the $d$-Schoenberg functions of   $f\in \mathcal P(\S^d,G)$ satisfy $|\varphi_{n,d}(u)|\le \varphi_{n,d}(e)$.

First of all  we note two formulas for the polynomials $c_n(d,x)$ which are special cases of formulas for general Gegenbauer polynomials
 $C^{(\lambda)}_n(x)$:

\begin{equation}\label{eq:Geg1}
d c_n'(d,x)=n(n+d-1)c_{n-1}(d+2,x),
\end{equation} 

\begin{equation}\label{eq:Geg2}
(1-x^2)c_n(d+2,x)=\frac{d}{2n+d+1}\left(c_n(d,x)-c_{n+2}(d,x)\right).
\end{equation}

These formulas lead to  
\begin{eqnarray*}\lefteqn{
(1-x^2)\sum_{n=1}^N \varphi_{n,d}(u)c'_n(d,x)}\\
&=&\sum_{n=1}^N\varphi_{n,d}(u)\frac{n(n+d-1)}{2n+d-1}\left(c_{n-1}(d,x)-c_{n+1}(d,x)\right)\\
&=& \sum_{n=0}^N\left(\varphi_{n+2,d}(u)\frac{(n+2)(n+d+1)}{2n+d+3}-\varphi_{n,d}(u)\frac{n(n+d-1)}{2n+d-1}\right)c_{n+1}(d,x)\\
&+& \varphi_{1,d}(u)\frac{d}{d+1}-\sum_{n=N-1}^N \varphi_{n+2,d}(u)\frac{(n+2)(n+d+1)}{2n+d+3}c_{n+1}(d,x)\\
&=& \sum_{n=0}^N \left(\frac{d(2n+d+1)(n+2)}{(2n+d+3)(n+d)}\varphi_{n+2,d}(u)-\frac{dn}{n+d}\varphi_{n,d+2}(u)\right)c_{n+1}(d,x)\\
 &+& \varphi_{1,d}(u)\frac{d}{d+1}-\sum_{n=N-1}^N \varphi_{n+2,d}(u)\frac{(n+2)(n+d+1)}{2n+d+3}c_{n+1}(d,x).
\end{eqnarray*}
In the last equality we have used Equation \eqref{eq:Schcoef3}, and we have for simplicity assumed $d\ge 2$, otherwise we shall use the other equations of Proposition~\ref{thm:Schfunctions}.
 The point is now that the coefficients in front of $\varphi_{n+2,d}(u)$  and $\varphi_{n,d+2}(u)$ form a  bounded sequence of nonnegative numbers, and since $\sum \varphi_{n+2,d}(e)<\infty$, $\sum\varphi_{n,d+2}(e)<\infty$, we can let $N\to\infty$. The  second trick of Ziegel is to notice that the two terms in  $\sum_{n=N-1}^N$ tend to $0$ for $N\to\infty$. To see this it is enough to prove that $n\varphi_{n,d}(e)\to 0$, and this is done like in \cite{Z}.

\medskip
{\it Proof of Theorem~\ref{thm:main2}:}

As preparation for the proof we need the following lemma.

\begin{lemma}[Lemma 1 in \cite{S}]\label{thm:Schtech} Let $c_n^{(\lambda)}(x)=C_n^{(\lambda)}(x)/C_n^{(\lambda)}(1)$ denote the normalized Gegenbauer polynomial. For each $x$ such that $-1<x<1$ and $\varepsilon>0$ there exists $L=L(x,\varepsilon)>0$  such that
\begin{equation*}\label{eq:Sch1}
|c_n^{(\lambda)}(x)-x^n|<\varepsilon, \quad n=0,1,\ldots,
\end{equation*}
provided $\lambda>L$.
\end{lemma}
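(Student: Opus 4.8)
The approach I would take is to represent the normalized Gegenbauer polynomial as an average of $n$-th powers of complex numbers of modulus at most one, which turns the claim into a concentration-of-measure statement. Concretely, I would start from the classical Gegenbauer (Laplace-type) integral representation, valid for $\lambda>0$ and every $n\ge 0$,
\[
c_n^{(\lambda)}(\cos\theta)=\frac1{A_\lambda}\int_0^\pi\bigl(\cos\theta+i\sin\theta\cos\phi\bigr)^n(\sin\phi)^{2\lambda-1}\,{\rm d}\phi,\qquad A_\lambda=\int_0^\pi(\sin\phi)^{2\lambda-1}\,{\rm d}\phi,
\]
which is found e.g. in \cite{M} and is readily matched against the generating function \eqref{eq:Geg}: both sides equal $1$ at $\theta=0$, and expanding $(1-2xr+r^2)^{-\lambda}=(1-re^{i\theta})^{-\lambda}(1-re^{-i\theta})^{-\lambda}$ yields the finite cosine expansion $C_n^{(\lambda)}(\cos\theta)=\sum_{k=0}^n\frac{(\lambda)_k(\lambda)_{n-k}}{k!(n-k)!}\cos((2k-n)\theta)$, which is reproduced by expanding the integrand binomially and computing the resulting beta integrals. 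Writing $z(\phi)=\cos\theta+i\sin\theta\cos\phi$ and letting ${\rm d}\nu_\lambda(\phi)=A_\lambda^{-1}(\sin\phi)^{2\lambda-1}\,{\rm d}\phi$ be the resulting probability measure on $[0,\pi]$, the identity reads $c_n^{(\lambda)}(x)=\int_0^\pi z(\phi)^n\,{\rm d}\nu_\lambda(\phi)$ with $x=\cos\theta$.

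The decisive observation is that $|z(\phi)|^2=\cos^2\theta+\sin^2\theta\cos^2\phi\le 1$, so that $|z(\phi)^n|\le 1$ for every $n$; this is what makes all estimates uniform in $n$. Since $\int_0^\pi x^n\,{\rm d}\nu_\lambda=x^n$, I would write
\[
c_n^{(\lambda)}(x)-x^n=\int_0^\pi\bigl(z(\phi)^n-x^n\bigr)\,{\rm d}\nu_\lambda(\phi)
\]
and split the integral at $|\phi-\pi/2|<\eta$ versus its complement. On the central part I would use the telescoping identity $z^n-x^n=(z-x)\sum_{j=0}^{n-1}z^jx^{n-1-j}$. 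Here the naive bound $|z^n-x^n|\le n|z-x|$ is useless because of the factor $n$, and the key point is that $|x|=\rho<1$ \emph{strictly}: with $|z|\le 1$ one gets $\sum_{j=0}^{n-1}|z|^j\rho^{n-1-j}\le\sum_{m\ge 0}\rho^m=(1-\rho)^{-1}$, hence $|z(\phi)^n-x^n|\le(1-\rho)^{-1}|z(\phi)-x|=(1-\rho)^{-1}\sin\theta\,|\cos\phi|$ for all $n$ simultaneously. Choosing $\eta$ small, depending on $\theta$ and $\varepsilon$, makes this bound $<\varepsilon/2$ throughout the central part.

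On the complementary part I would simply use $|z(\phi)^n-x^n|\le 2$ together with the concentration of $\nu_\lambda$ at $\phi=\pi/2$: since $\sin\phi\le\cos\eta<1$ there while $A_\lambda$ is dominated by a neighbourhood of $\pi/2$, a crude Laplace estimate gives $\nu_\lambda(\{|\phi-\pi/2|\ge\eta\})\le C(\eta)\,r(\eta)^{2\lambda-1}$ with $r(\eta)<1$, which tends to $0$ as $\lambda\to\infty$. Thus there is $L=L(x,\varepsilon)$ such that this mass is $<\varepsilon/4$ for $\lambda>L$, so the complementary part contributes less than $\varepsilon/2$. Adding the two parts gives $|c_n^{(\lambda)}(x)-x^n|<\varepsilon$ for all $n$ at once whenever $\lambda>L$, which is the assertion. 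The one genuine obstacle is exactly the uniformity in $n$, and it is resolved entirely by the bound $|z(\phi)|\le 1$ combined with the strict inequality $|x|<1$ in the geometric sum; this is also precisely what forces $L$ to depend on $x$ and to blow up as $x\to\pm 1$.
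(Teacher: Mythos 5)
Your argument is correct. The paper does not prove this lemma itself---it is quoted as Lemma~1 of Schoenberg \cite{S}---and Schoenberg's original proof rests on exactly the same Gegenbauer--Laplace integral representation $c_n^{(\lambda)}(\cos\theta)=A_\lambda^{-1}\int_0^\pi(\cos\theta+i\sin\theta\cos\phi)^n(\sin\phi)^{2\lambda-1}\,{\rm d}\phi$, combined with the same two ingredients you isolate: the uniform bound $|z(\phi)|\le 1$ together with the geometric-series estimate that uses $|x|<1$ to remove the factor $n$, and the concentration of the weight $(\sin\phi)^{2\lambda-1}$ at $\phi=\pi/2$ as $\lambda\to\infty$. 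So your proof is essentially the classical one, carried out correctly and with the crucial uniformity in $n$ properly accounted for.
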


 For each $d=1,2,\ldots$ we have the series expansion
$$
f(x,u)=\sum_{n=0}^\infty \varphi_{n,d}(u) c_n(d,x),\quad x\in[-1,1],u\in G,
$$
and we know that $|\varphi_{n,d}(u)|\le \varphi_{n,d}(e)\le f(1,e)$.

For a topological space $X$ and an index set $J$ we denote as in \cite[p. 113]{Mu} by $X^J$ the
product set of $J$-tuples $(x_j)_{j\in J}$, or equivalently as the set of functions $x:J\to X$, where we write $x(j)=x_j$ for $j\in J$. We equip $X^J$ with the product topology. 

 Let now  $X=\{z\in\C\,|\,|z|\le f(1,e)\}$ which is a compact disc in $\C$, and consider the index set $J=\N_0\times G$, where $\N_0=\{0,1,\ldots\}$. The  product set
$X^{\N_0\times G}$  equipped with the product topology is compact by Tychonoff's Theorem, and this implies that the sequence
\begin{equation}\label{eq:Sch11}
d\to (\varphi_{n,d}(u))_{(n,u)\in \N_0\times G}
\end{equation}  
 has an accumulation point  in $X^{\N_0\times G}$, which can be considered as
a sequence of functions $\varphi_n:G\to X, n\in\N_0$.  Being an accumulation point means that for each $\varepsilon>0$ and each finite set $F\subset \N_0\times G$ we have
\begin{equation}\label{eq:Sch12}
|\varphi_{n,d}(u)-\varphi_n(u)|<\varepsilon,\;  (n,u)\in F
\end{equation}
for infinitely many values of $d$. Therefore each $\varphi_n$ is a positive definite function on $G$. We stress that it is not clear at all at this point that they are continuous.

We now fix $x$ such that $-1<x<1$ and $u_0\in G$, and  shall prove that
\begin{equation}\label{eq:Sch21}
f(x,u_0)=\sum_{n=0}^\infty \varphi_n(u_0)x^n.
\end{equation} 
We first note that the right-hand side converges because $|\varphi_n(u_0)|\le f(1,e)$, and similarly
$\sum \varphi_{n,d}(u_0)x^n$ converges for each $d$.

Let now $\varepsilon>0$ be given and let $L=L(x,\varepsilon)$ be the number from Lemma~\ref{thm:Schtech}.
We next choose  $N\in\N$ such that
$$
\sum_{n=N+1}^\infty |x|^n < \varepsilon.
$$
From Equation \eqref{eq:Sch12} (with $F=\{0,1,\ldots,N\}\times\{u_0\}$) follows that 
 we can choose $d\in\N$ such that $(d-1)/2>L$ and such that
$$
|\varphi_{n,d}(u_0)-\varphi_n(u_0)|<\varepsilon,\; n=0,1,\ldots,N.
$$
 We then have
$$
f(x,u_0)-\sum_{n=0}^\infty \varphi_n(u_0)x^n=
 \sum_{n=0}^\infty \varphi_{n,d}(u_0)(c_n(d,x)-x^n) +\sum_{n=0}^\infty (\varphi_{n,d}(u_0)-\varphi_n(u_0))x^n,
$$
hence
\begin{eqnarray*}
\lefteqn{|f(x,u_0)-\sum_{n=0}^\infty \varphi_n(u_0)x^n|\le}\\
&& \sum_{n=0}^\infty \varphi_{n,d}(e)|c_n(d,x)-x^n|+\sum_{n=0}^N |\varphi_{n,d}(u_0)-\varphi_n(u_0)||x|^n + \sum_{n=N+1}^\infty 2f(1,e)|x|^n\\
&&<  \varepsilon f(1,e) + \frac{\varepsilon}{1-|x|}+2\varepsilon f(1,e),
\end{eqnarray*}
where we have used Lemma~\ref{thm:Schtech}.

Since the right-hand side can be made arbitrarily small, we have proved \eqref{eq:Sch21}.
 
Putting $u_0=e$ in Equation \eqref{eq:Sch21}, we have in particular
$$
f(x,e)=\sum_{n=0}^\infty \varphi_n(e)x^n,\quad -1<x<1,
$$
and since $\varphi_n(e)\ge 0$ we get by Lebesgue's Monotonicity Theorem that
$$
f(1,e)=\lim_{x\to 1^{-}} f(x,e)=\sum_{n=0}^\infty \varphi_n(e).
$$
In particular $\sum_{n=0}^\infty \varphi_n(e)<\infty$. Combined with the inequality 
$|\varphi_n(u)|\le \varphi_n(e),\;u\in G$ for the positive definite function $\varphi_n$, this shows that the right-hand side of \eqref{eq:Sch21} is continuous for $x\in[-1,1]$, and finally
\eqref{eq:Sch21} holds for all $(x,u)\in[-1,1]\times G$.

By Corollary~\ref{thm:Z2} we know that $\partial^n f(x,u)/\partial x^n$ is  continuous on $]-1,1[\times G$ for all $n$ and in particular
\begin{equation}\label{eq:Sch22}
\frac{\frac{\partial^n f(0,u)}{\partial x^n}}{n!}=\varphi_n(u)
 \end{equation}
is continuous.

We finally have to prove Equation \eqref{eq:expandpp}. We have shown the identity 
$$
f(x,u)=\sum_{n=0}^\infty \varphi_n(u)x^n,\quad (x,u)\in [-1,1]\times G
$$
for any accumulation point $(\varphi_n(u))$ in $X^{\N_0\times G}$ of the sequence 
$$
d\to (\varphi_{n,d}(u))_{(n,u)\in \N_0\times G}.
$$
However, because of \eqref{eq:Sch22}, the accumulation point is uniquely determined, and then it is easy to see that the sequence \eqref{eq:Sch11} converges to the unique accumulation point in the compact space $X^{\N_0\times G}$. In fact, if the convergence does not take place, there exists an open neighbourhood $U$ of $(\varphi_n(u))$ in  $X^{\N_0\times G}$ and a subsequence 
$$
(\varphi_{n,d_k}(u)),\; d_1<d_2<\ldots
$$
such that $(\varphi_{n,d_k}(u))\notin U$ for all $k\in\N$. By compactness of
 $X^{\N_0\times G}\setminus U$ this subsequence has an accumulation point in
$X^{\N_0\times G}\setminus U$, but this accumulation point is also an accumulation point of the sequence \eqref{eq:Sch11} and hence equal to $(\varphi_n(u))$, which is a contradiction.
 $\quad\square$

\section{$d$-Schoenberg functions for $f\in\mathcal P(\S^\infty,G)$}

In this section we shall describe the $d$-Schoenberg functions for $f\in\mathcal P(\S^\infty,G)$.
Although this is quite elementary we have included it because of the potential applications in geostatistics. We need the connection coefficients between  the monomials and the normalized Gegenbauer polynomials $c_n(d,x)$. These are given e.g. in Bingham \cite[Lemma 1]{Bi}. We recall Gegenbauer's formula for the connection coefficients between two different Gegenbauer polynomials:
\begin{lemma}\label{thm:con1} For $\lambda,\mu>0$ we have
$$
C_n^{(\lambda)}(x)=\sum_{k=0}^{[n/2]} \frac{(\lambda)_{n-k}(\lambda-\mu)_k(n+\mu-2k)}{(\mu)_{n-k+1}k!}C_{n-2k}^{(\mu)}(x).
$$
\end{lemma}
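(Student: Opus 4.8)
The plan is to prove this classical identity by expanding $C_n^{(\lambda)}$ in the orthogonal basis $\{C_m^{(\mu)}\}_{m\ge 0}$ and then pinning down the coefficients through the orthogonality relation \eqref{eq:orth}. First I would record two structural facts. From the generating function \eqref{eq:Geg}, substituting $x\mapsto -x$ and $r\mapsto -r$ yields the parity relation $C_n^{(\lambda)}(-x)=(-1)^n C_n^{(\lambda)}(x)$, valid for every $\lambda$. Since $\{C_m^{(\mu)}\}_{m\ge 0}$ is a graded basis of the polynomials ($\deg C_m^{(\mu)}=m$), there is a unique expansion $C_n^{(\lambda)}(x)=\sum_{m=0}^n a_{n,m}C_m^{(\mu)}(x)$, and matching parities forces $a_{n,m}=0$ unless $m=n-2k$. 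Thus I may write $C_n^{(\lambda)}=\sum_{k=0}^{[n/2]}a_{n,k}C_{n-2k}^{(\mu)}$, and the whole problem reduces to identifying $a_{n,k}$ with the stated quotient of Pochhammer symbols.

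Next I would extract $a_{n,k}$ by pairing against $C_{n-2k}^{(\mu)}$ in the inner product with weight $(1-x^2)^{\mu-1/2}$. By \eqref{eq:orth} the denominator $\int_{-1}^1 (C_{n-2k}^{(\mu)})^2(1-x^2)^{\mu-1/2}\,dx$ is explicit, so everything hinges on the numerator integral
$$
I_{n,k}=\int_{-1}^1 C_n^{(\lambda)}(x)\,C_{n-2k}^{(\mu)}(x)\,(1-x^2)^{\mu-1/2}\,dx.
$$
To evaluate $I_{n,k}$ I would use the Rodrigues representation of $C_{n-2k}^{(\mu)}(x)(1-x^2)^{\mu-1/2}$ as a constant times $\frac{d^{\,n-2k}}{dx^{\,n-2k}}(1-x^2)^{(n-2k)+\mu-1/2}$, integrate by parts $n-2k$ times (the boundary terms vanish since the weight and its derivatives of order $\le n-2k-1$ vanish at $\pm 1$, using $\mu>0$), and then apply the differentiation formula $\frac{d^m}{dx^m}C_n^{(\lambda)}=2^m(\lambda)_m C_{n-m}^{(\lambda+m)}$ with $m=n-2k$. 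This collapses $I_{n,k}$ to a multiple of the single-polynomial integral $\int_{-1}^1 C_{2k}^{(\lambda+n-2k)}(x)(1-x^2)^{(n-2k)+\mu-1/2}\,dx$.

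The remaining integral is a standard beta-type evaluation: writing the even polynomial $C_{2k}^{(\lambda+n-2k)}$ through its terminating monomial expansion and integrating termwise against the even power weight via $\int_{-1}^1 x^{2j}(1-x^2)^{\beta-1/2}\,dx=B(j+\tfrac12,\beta+\tfrac12)$ gives a terminating sum that evaluates in closed form as a ratio of Gamma functions. I would then assemble the three ingredients---the Rodrigues constant, the derivative factor $2^{\,n-2k}(\lambda)_{n-2k}$, and the beta integral---together with the denominator from \eqref{eq:orth}, and simplify the resulting Gamma and Pochhammer factors down to $\frac{(\lambda)_{n-k}(\lambda-\mu)_k(n+\mu-2k)}{(\mu)_{n-k+1}k!}$.

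I expect the main obstacle to be precisely this final bookkeeping: tracking the normalizing constants through the Rodrigues formula, the repeated integration by parts, and the beta integral, and then coaxing the quotient of Gamma functions into the compact Pochhammer form. A secondary point worth addressing is that the derivation through the weight $(1-x^2)^{\mu-1/2}$ presumes $\mu>0$ for integrability and is cleanest when $\lambda>\mu$ (so that $(\lambda-\mu)_k>0$); since, for fixed $n$ and $x$, both sides are rational functions of the parameters $\lambda,\mu$, the identity established on the open set $\lambda>\mu>0$ extends to all $\lambda,\mu>0$ by analytic continuation in the parameters, which also accounts painlessly for the cases where $(\lambda-\mu)_k$ vanishes or is negative.
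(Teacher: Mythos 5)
The paper does not actually prove this lemma: it simply points to Andrews--Askey--Roy \cite[p.~360]{A:A:R}, so there is no internal argument to compare yours against. Judged on its own, your outline is a correct and standard derivation. The structural steps are all sound: the parity relation from \eqref{eq:Geg}, the fact that $\{C_m^{(\mu)}\}_{m\ge 0}$ is a graded basis for $\mu>0$ (leading coefficient $2^m(\mu)_m/m!\neq 0$), the extraction of $a_{n,k}$ by orthogonality, the Rodrigues representation with boundary terms killed by the exponent $\ge\mu+\tfrac12>0$ after at most $n-2k-1$ differentiations, the formula $\frac{d^m}{dx^m}C_n^{(\lambda)}=2^m(\lambda)_mC_{n-m}^{(\lambda+m)}$, the beta integral, and the rational-function continuation in $(\lambda,\mu)$ that removes the auxiliary assumption $\lambda>\mu$. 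Be aware, though, that the ``final bookkeeping'' you defer is really the entire content of the lemma: after the duplication formula the terminating sum you reach is a ${}_2F_1(-k,b;c;1)$, which must be summed by Chu--Vandermonde as $(c-b)_k/(c)_k$, and it is exactly this step that manufactures the factor $(\lambda-\mu)_k$ in the numerator; until that evaluation is carried through and the Gamma quotients are matched, the identity is not yet established. A shortcut worth knowing, closer in spirit to the classical treatment, is to integrate the generating function itself against $C_m^{(\mu)}(x)(1-x^2)^{\mu-1/2}$: the resulting Gegenbauer integral is proportional to $r^m(1-r^2)^{-(\lambda-\mu)}$, and expanding $(1-r^2)^{-(\lambda-\mu)}$ by the binomial series delivers all the connection coefficients, and the nonnegativity for $\lambda>\mu$, in one stroke.
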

A simple proof can be found in \cite[p.360]{A:A:R}. It is important to notice that the connection coefficients are non-negative when $\lambda>\mu$.

\begin{lemma}\label{thm:con2} For $\mu>0$ we have
\begin{equation}\label{eq:conc1}
x^n=\sum_{k=0}^{[n/2]}\frac{n!(n+\mu-2k)}{2^n(\mu)_{n-k+1} k!}C_{n-2k}^{(\mu)}(x),
\end{equation}
and for $\mu=0$ we have
\begin{equation}\label{eq:conc2}
x^n=\sum_{k=0}^{[n/2]} \frac{\binom{n}{k}}{2^n} N_{n-2k}(1)T_{n-2k}(x),
\end{equation}
where $N_n(1)=2$ for $n\ge 1$ and $N_0(1)=1$, cf. \eqref{eq:dim}.
\end{lemma}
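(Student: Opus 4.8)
The plan is to derive the first identity \eqref{eq:conc1} from Gegenbauer's connection formula (Lemma~\ref{thm:con1}) by letting the upper parameter $\lambda\to\infty$, and to prove the Chebyshev identity \eqref{eq:conc2} separately by an elementary de Moivre expansion, since the right-hand side of \eqref{eq:conc1} degenerates as $\mu\to 0^+$ (the denominator factor $(\mu)_{n-k+1}$ vanishes). For $\mu>0$ I would start from the explicit expansion $C_n^{(\lambda)}(x)=\sum_{j=0}^{[n/2]}(-1)^j\frac{(\lambda)_{n-j}}{j!(n-2j)!}(2x)^{n-2j}$, whose $j=0$ term exhibits the leading coefficient $2^n(\lambda)_n/n!$. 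Dividing the identity of Lemma~\ref{thm:con1} by this number turns its left-hand side into the \emph{monic} Gegenbauer polynomial, and the same expansion shows that its coefficient of $x^{n-2j}$ carries a factor $(\lambda)_{n-j}/(\lambda)_n=1/(\lambda+n-j)_j\to 0$ as $\lambda\to\infty$ for every $j\ge 1$; hence the monic polynomial tends to $x^n$ pointwise (indeed uniformly on compact sets).

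On the right-hand side, after the same division, the coefficient of the fixed polynomial $C_{n-2k}^{(\mu)}(x)$ becomes
$$
\frac{n!\,(n+\mu-2k)}{2^n(\mu)_{n-k+1}\,k!}\cdot\frac{(\lambda)_{n-k}(\lambda-\mu)_k}{(\lambda)_n}.
$$
The key computation is that the $\lambda$-dependent ratio tends to $1$: writing $(\lambda)_n=(\lambda)_{n-k}(\lambda+n-k)_k$ reduces it to $(\lambda-\mu)_k/(\lambda+n-k)_k$, a quotient of two products of $k$ factors, each factor asymptotic to $\lambda$, so the quotient $\to 1$ as $\lambda\to\infty$. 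Since the sum in Lemma~\ref{thm:con1} has only $[n/2]+1$ terms and each $C_{n-2k}^{(\mu)}(x)$ is independent of $\lambda$, I may pass to the limit term by term and obtain \eqref{eq:conc1} as an identity between polynomials in $x$.

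For the Chebyshev case $\mu=0$, I would instead set $x=\cos\theta$ and expand by de Moivre: from $\cos\theta=(e^{i\theta}+e^{-i\theta})/2$ one gets $\cos^n\theta=2^{-n}\sum_{k=0}^n\binom{n}{k}\cos((n-2k)\theta)$. Using $\cos(m\theta)=T_m(\cos\theta)$ and pairing the index $k$ with $n-k$ (which yield the same cosine and satisfy $\binom{n}{k}=\binom{n}{n-k}$), each pair with $n-2k\ge 1$ contributes $2\binom{n}{k}$ while the lone term with $n-2k=0$ contributes $\binom{n}{n/2}$; these multiplicities reassemble precisely into $N_{n-2k}(1)$, which equals $2$ for $n-2k\ge 1$ and $1$ for $n-2k=0$, yielding \eqref{eq:conc2}.

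The only point demanding care is the bookkeeping of the Pochhammer ratio together with the choice of the exact leading coefficient $2^n(\lambda)_n/n!$ as the normalizing factor: this is what makes both the left-hand limit collapse to $x^n$ and the right-hand limits collapse cleanly onto the stated coefficients. The degeneration of \eqref{eq:conc1} at $\mu=0$ is sidestepped entirely by the separate de Moivre computation, so no delicate limit $\mu\to 0^+$ is needed.
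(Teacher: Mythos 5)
Your proof is correct. For \eqref{eq:conc1} it follows the paper's strategy --- pass to the limit $\lambda\to\infty$ in the connection formula of Lemma~\ref{thm:con1} after normalizing --- but with two genuine variations in the execution. The paper divides by $C_n^{(\lambda)}(1)=(2\lambda)_n/n!$ and quotes Lemma~\ref{thm:Schtech} (Schoenberg's Lemma~1) to get $c_n^{(\lambda)}(x)\to x^n$ for $-1<x<1$, afterwards upgrading to an identity between polynomials; you normalize instead by the leading coefficient $2^n(\lambda)_n/n!$ and verify directly from the explicit expansion of $C_n^{(\lambda)}$ that every coefficient of the monic polynomial below the top degree carries a factor $1/(\lambda+n-j)_j\to 0$, which makes the limit elementary, valid for all $x$ at once, and independent of the auxiliary lemma. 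Your reduction of the Pochhammer ratio, $(\lambda)_{n-k}(\lambda-\mu)_k/(\lambda)_n=(\lambda-\mu)_k/(\lambda+n-k)_k\to 1$, is exactly the bookkeeping the paper leaves implicit, and since the sum has only $[n/2]+1$ terms the termwise passage to the limit is legitimate; the two normalizations agree in the limit because $(2\lambda)_n/(2^n(\lambda)_n)\to 1$. For \eqref{eq:conc2} the paper simply cites Gradshteyn--Ryzhik, whereas your de Moivre expansion of $\cos^n\theta$, with the pairing $k\leftrightarrow n-k$ producing precisely the multiplicities $N_{n-2k}(1)$, is a correct self-contained substitute, and it rightly sidesteps the degeneration of \eqref{eq:conc1} as $\mu\to 0^+$.
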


\begin{proof} We refer to \cite[Lemma 1]{Bi}, but for the convenience of the reader we indicate a proof. Divide the formula in Lemma~\ref{thm:con1} by $(2\lambda)_n/n!$ in order to get the normalized Gegenbauer polynomials $c_n^{(\lambda)}(x)$, and next  use that the $n$'th normalized Gegenbauer polynomial tends to $x^n$ for $\lambda\to\infty$ when $-1<x<1$, cf. Lemma~\ref{thm:Schtech}. This shows formula \eqref{eq:conc1} for these values of $x$, but as an identity between polynomials, it then holds for all values of $x$. Formula \eqref{eq:conc2} can be found e.g. in \cite{G:R}. 
\end{proof} 

It is important to notice that the connection coefficients of Lemma~\ref{thm:con2} are alternating strictly positive and 0. 

Specializing to $\mu=(d-1)/2,d\in\N$ we can simplify Lemma~\ref{thm:con2} to the following:

\begin{cor}\label{thm:con3} For $d\in\N$ and $n\in\N_0$ we have
$$
x^n=\sum_{k=0}^{[n/2]}\gamma^{(d)}(n,k)c_{n-2k}(d,x),
$$
where
\begin{equation}\label{eq:conc3}
\gamma^{(d)}(n,k)=\left\{\begin{array}{ll}
\frac{n!(d-1)_{n-2k}(n-2k+(d-1)/2)}{2^n k!(n-2k)!((d-1)/2)_{n-k+1}} & \mbox{if $d\ge 2$}\\
\frac{\binom{n}{k}}{2^n}N_{n-2k}(1) & \mbox{if $d=1$}.
\end{array}
\right.
\end{equation} 
\end{cor}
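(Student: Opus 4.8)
The plan is to specialize the general connection formula of Lemma~\ref{thm:con2} to the parameter value $\mu=(d-1)/2$ and then pass from the unnormalized Gegenbauer polynomials $C_{n-2k}^{(\mu)}$ to the normalized ones $c_{n-2k}(d,\cdot)$ adapted to the sphere $\S^d$. Because the normalization \eqref{eq:nor} is valid only for $d\ge 2$, I would treat the two cases $d\ge 2$ and $d=1$ separately from the outset.

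For $d\ge 2$ I would start from Equation \eqref{eq:conc1} with $\mu=(d-1)/2$, so that
\[
x^n=\sum_{k=0}^{[n/2]}\frac{n!\,(n+(d-1)/2-2k)}{2^n\,((d-1)/2)_{n-k+1}\,k!}\,C_{n-2k}^{((d-1)/2)}(x).
\]
Using the value $C_m^{(\lambda)}(1)=(2\lambda)_m/m!$ recorded in the Background section with $\lambda=(d-1)/2$, one has $C_m^{((d-1)/2)}(1)=(d-1)_m/m!$, and therefore by \eqref{eq:nor} the identity $C_{n-2k}^{((d-1)/2)}(x)=\frac{(d-1)_{n-2k}}{(n-2k)!}\,c_{n-2k}(d,x)$. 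Substituting this into the displayed formula collects the coefficient of $c_{n-2k}(d,x)$ into
\[
\gamma^{(d)}(n,k)=\frac{n!\,(n+(d-1)/2-2k)}{2^n\,((d-1)/2)_{n-k+1}\,k!}\cdot\frac{(d-1)_{n-2k}}{(n-2k)!},
\]
and since $n+(d-1)/2-2k=(n-2k)+(d-1)/2$, this is exactly the $d\ge 2$ branch of \eqref{eq:conc3}.

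For $d=1$ the relevant parameter is $\mu=0$ and the normalized polynomial is $c_n(1,x)=T_n(x)$, which is already normalized, so here I would read off the coefficients directly from Equation \eqref{eq:conc2}, obtaining $\gamma^{(1)}(n,k)=\binom{n}{k}N_{n-2k}(1)/2^n$, the $d=1$ branch of \eqref{eq:conc3}. I do not expect any genuine obstacle: the statement is essentially a rewriting of Lemma~\ref{thm:con2} in the sphere-adapted normalization, and the only point requiring care is the bookkeeping in converting $C_m^{((d-1)/2)}(1)$ into a Pochhammer symbol and keeping the degenerate case $d=1$ (where \eqref{eq:nor} fails) cleanly separated from the generic case.
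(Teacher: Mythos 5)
Your proposal is correct and is exactly the route the paper takes: the corollary is obtained by specializing Lemma~\ref{thm:con2} to $\mu=(d-1)/2$ and converting $C_{n-2k}^{((d-1)/2)}$ to $c_{n-2k}(d,\cdot)$ via $C_m^{((d-1)/2)}(1)=(d-1)_m/m!$, with the $d=1$ case read off directly from \eqref{eq:conc2}. The paper in fact omits these routine details entirely, so your write-up supplies precisely the bookkeeping it leaves implicit.
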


Using the notation above, we can now give the main result of this section.

\begin{thm}\label{thm:infty-d} For $f\in\mathcal P(\S^\infty,G)$ with  the representation
\begin{equation}\label{eq:e1}
f(x,u)=\sum_{n=0}^\infty \varphi_n(u)x^n,\quad (x,u)\in [-1,1]\times G,
\end{equation}
the $d$-Schoenberg functions in the representation of $f\in\mathcal P(\S^d,G)$ as
$$
f(x,u)=\sum_{n=0}^\infty \varphi_{n,d}(u)c_n(d,x),\quad (x,u)\in [-1,1]\times G,
$$
are given as 
\begin{equation}\label{eq:conc4}
\varphi_{n,d}(u)=\sum_{j=0}^\infty \varphi_{n+2j}(u)\gamma^{(d)}(n+2j,j),\quad u\in G
\end{equation}
and the series is uniformly convergent for $u\in G$. Here
\begin{equation}\label{eq:conc5}
\gamma^{(d)}(n+2j,j)=\left\{\begin{array}{ll}
\frac{(n+2j)!(d-1)_{n}(n+(d-1)/2)}{n! 2^{n+2j} j! ((d-1)/2)_{n+j+1}} & \mbox{if $d\ge 2$}\\
\frac{\binom{n+2j}{j}}{2^{n+2j}}N_{n}(1) & \mbox{if $d=1$}.
\end{array}
\right.
\end{equation}
\end{thm}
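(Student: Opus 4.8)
The plan is to obtain \eqref{eq:conc4} by starting from the power series representation \eqref{eq:e1}, replacing each monomial $x^n$ by its expansion in the normalized Gegenbauer polynomials supplied by Corollary~\ref{thm:con3}, and then recollecting the resulting double series according to the degree of $c_m(d,\cdot)$. Since $f\in\mathcal P(\S^\infty,G)\subseteq\mathcal P(\S^d,G)$, Theorem~\ref{thm:main} guarantees that $f$ has a \emph{unique} expansion $f(x,u)=\sum_m\varphi_{m,d}(u)c_m(d,x)$, so once the recollection is justified the coefficient of $c_n(d,x)$ must coincide with $\varphi_{n,d}(u)$, which is exactly \eqref{eq:conc4}. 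Writing $x^{n}=\sum_{k=0}^{[n/2]}\gamma^{(d)}(n,k)c_{n-2k}(d,x)$ and substituting $m=n-2k$, the terms contributing to a fixed degree $m$ are those with $n=m+2j$, $j\ge 0$ (so that $k=j$), which produces the inner sum $\sum_{j\ge0}\varphi_{m+2j}(u)\gamma^{(d)}(m+2j,j)$.

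First I would establish the one fact that makes the rearrangement legitimate: putting $x=1$ in the identity of Corollary~\ref{thm:con3} and using $c_m(d,1)=1$ gives
$$
\sum_{k=0}^{[n/2]}\gamma^{(d)}(n,k)=1,\qquad n\in\N_0,
$$
with all $\gamma^{(d)}(n,k)\ge0$. Combined with $|\varphi_n(u)|\le\varphi_n(e)$ (valid because each $\varphi_n\in\mathcal P(G)$), with $|c_m(d,x)|\le1$ on $[-1,1]$, and with $\sum_{n}\varphi_n(e)=f(1,e)<\infty$ from Theorem~\ref{thm:main2}, this yields
$$
\sum_{n=0}^\infty\sum_{k=0}^{[n/2]}|\varphi_n(u)|\,\gamma^{(d)}(n,k)\,|c_{n-2k}(d,x)|\le\sum_{n=0}^\infty\varphi_n(e)<\infty,
$$
uniformly for $(x,u)\in[-1,1]\times G$. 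Hence the double series is absolutely convergent, uniformly in $(x,u)$, and may be rearranged and the summation order interchanged without affecting its sum. This absolute-convergence bookkeeping is the only genuinely delicate point in the argument; the normalization $\sum_k\gamma^{(d)}(n,k)=1$ is precisely what reduces it to the already known summability $\sum_n\varphi_n(e)<\infty$.

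It then remains to record two consequences. The uniform convergence in $u$ of the series in \eqref{eq:conc4} follows from the Weierstrass M-test: since $\gamma^{(d)}(n+2j,j)\le1$ by the normalization above, each term is dominated by $\varphi_{n+2j}(e)$, and $\sum_{j}\varphi_{n+2j}(e)$ is a convergent tail of $\sum_m\varphi_m(e)$. Finally the explicit expression \eqref{eq:conc5} is obtained by the substitution $n\mapsto n+2j$, $k\mapsto j$ in the formula \eqref{eq:conc3} of Corollary~\ref{thm:con3}; this is a routine simplification of Pochhammer symbols and factorials and involves no further convergence issues.
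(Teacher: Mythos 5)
Your proposal is correct, and it reaches \eqref{eq:conc4} by a mildly different route than the paper. The paper starts from the integral formula \eqref{eq:coef} for $\varphi_{n,d}(u)$, inserts the uniformly convergent power series \eqref{eq:e1}, interchanges integration and summation, expands each monomial $x^p$ via Corollary~\ref{thm:con3}, and then lets the orthogonality relation \eqref{eq:orthspec} kill every term except $p=n+2j$. You instead expand the monomials inside the power series itself, rearrange the resulting absolutely convergent double series by Gegenbauer degree, and then identify coefficients; the reindexing $p=n+2j$, $k=j$ is the same combinatorial step in both arguments. Two remarks. First, your justification of uniform convergence of \eqref{eq:conc4} via the normalization $\sum_{k}\gamma^{(d)}(n,k)=1$ (obtained by setting $x=1$ in Corollary~\ref{thm:con3} and using $c_m(d,1)=1$, together with $\gamma^{(d)}(n,k)\ge 0$) is actually cleaner than the paper's, which invokes Stirling's formula to bound $\gamma^{(d)}(n+2j,j)$; your bound $\gamma^{(d)}(n+2j,j)\le 1$ does the job with no asymptotics. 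Second, the one place where you lean on something not literally stated is the \emph{uniqueness} of the expansion in Theorem~\ref{thm:main}: the theorem asserts existence with coefficients given by \eqref{eq:coef}, and uniqueness follows because any uniformly convergent expansion $f(x,u)=\sum_m\psi_m(u)c_m(d,x)$ can be integrated termwise against $c_n(d,x)(1-x^2)^{d/2-1}$ to recover $\psi_n$ via \eqref{eq:orthspec}. You should spell out that one line; once you do, the identification step is airtight and your argument is essentially equivalent to the paper's, just with the orthogonality applied after the rearrangement rather than before it.
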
 

\begin{proof} Inserting the uniformly convergent expansion \eqref{eq:e1} in Equation \eqref{eq:coef} for $\varphi_{n,d}(u)$, we get after interchanging integration and summation 
\begin{eqnarray*}
\lefteqn{\varphi_{n,d}(u)}\\
&=&\sum_{p=0}^\infty \varphi_p(u)\frac{N_n(d)\sigma_{d-1}}{\sigma_d}\int_{-1}^1 x^p c_n(d,x)(1-x^2)^{d/2-1}\,{\rm d}x\\
&=&\sum_{p=0}^\infty \varphi_p(u) \sum_{j=0}^{[p/2]}\gamma^{(d)}(p,j) \frac{N_n(d)\sigma_{d-1}}{\sigma_d}\int_{-1}^1 c_{p-2j}(d,x) c_n(d,x)(1-x^2)^{d/2-1}\,{\rm d}x,
\end{eqnarray*}
where we have used Corollary~\ref{thm:con3}. 

The last integral is zero unless $p-2j=n$, which requires $p=n+2j,\,j\in\N_0$. 
Using \eqref{eq:orthspec} we get
$$
\varphi_{n,d}(u)=\sum_{j=0}^\infty \varphi_{n+2j}(u)\gamma^{(d)}(n+2j,j),
$$ 
and the expression in \eqref{eq:conc5} is obvious from Corollary~\ref{thm:con3}.  By Stirling's formula, the expression
$\gamma^{(d)}(n+2j,j)$  is easily seen to be bounded for $j\in\N_0$, and therefore the 
series in Equation \eqref{eq:conc4} is uniformly convergent.
\end{proof}

The special case of Equation \eqref{eq:conc4} where $G=\{e\}$ is given in \cite[Theorem 4.2 (b)]{M:N:P:R}.

\section{Applications to some homogeneous spaces}

In a recent manuscript \cite{G:M:P} the authors prove characterization results for isotropic positive definite kernels on $\S^d\times \S^{d'}$ for $d,d'\in\N\cup\{\infty\}$. They consider the set
$\mathcal P(\S^d,\S^{d'})$  of continuous functions $f:[-1,1]^2\to\R$ such that the kernel $K$ of Equation \eqref{eq:gmp} is  positive definite.

Specializing Theorem~\ref{thm:main} and Theorem~\ref{thm:main2} to the compact group
$G=O(d'+1),d'\in\N$ of orthogonal transformations of $\R^{d'+1}$, we can deduce Theorem 2.9 and Theorem 3.4 from \cite{G:M:P}. Their Theorem 3.5 corresponding to the case where $d=d'=\infty$ cannot be deduced  from our result, since the group of unitary operators in an infinite dimensional Hilbert space is not compact.   

\begin{thm}[Theorem 2.9 of \cite{G:M:P}] Let $d,d'\in\N$ and let $f:[-1,1]^2\to\R$ be a continuous function. Then $f\in\mathcal P(\S^d,\S^{d'})$ if and only if
\begin{equation}\label{eq:2pd}
f(x,y)=\sum_{n,m=0}^\infty \widehat{f}_{n,m}c_n(d,x)c_m(d',y),\quad x,y\in[-1,1],
\end{equation} 
where $\widehat{f}_{n,m}\ge 0$ such that $\sum \widehat{f}_{n,m}<\infty$.

The  above expansion is uniformly convergent, and we have
\begin{eqnarray}\label{eq:coef2pd}\lefteqn{
\widehat{f}_{n,m}=\frac{N_n(d)\sigma_{d-1}}{\sigma_d}\frac{N_m(d')\sigma_{d'-1}}{\sigma_{d'}}}\nonumber\\
&\times&\int_{-1}^1\int_{-1}^1 f(x,y)c_n(d,x)c_m(d',y)(1-x^2)^{d/2-1}(1-y^2)^{d'/2-1}\,{\rm d}x\,{\rm d}y.
\end{eqnarray}
\end{thm}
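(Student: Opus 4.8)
The plan is to deduce this from Theorem~\ref{thm:main} by realizing the second sphere as a homogeneous space of a compact group. I would set $G=O(d'+1)$, fix the base point $\zeta_0=\varepsilon_1\in\S^{d'}$, and use that $O(d'+1)$ acts transitively on $\S^{d'}$ with the stabilizer of $\zeta_0$ equal to $O(d')$. Since orthogonal matrices preserve the inner product, $u\zeta_0\cdot v\zeta_0=\zeta_0\cdot u^{-1}v\zeta_0$ for all $u,v\in G$. I would therefore introduce the auxiliary function $\tilde f:[-1,1]\times G\to\R$ by $\tilde f(x,g)=f(x,\zeta_0\cdot g\zeta_0)$ and first establish the equivalence $f\in\mathcal P(\S^d,\S^{d'})\iff\tilde f\in\mathcal P(\S^d,G)$. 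In one direction, for $(\xi_k,u_k)\in\S^d\times G$ the matrix $[\tilde f(\xi_k\cdot\xi_l,u_k^{-1}u_l)]$ equals $[f(\xi_k\cdot\xi_l,u_k\zeta_0\cdot u_l\zeta_0)]$, which is the kernel $K$ evaluated at the points $(\xi_k,u_k\zeta_0)\in\S^d\times\S^{d'}$; in the other direction, transitivity lets me write any $\zeta_k=u_k\zeta_0$ and reverse the computation.

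Granting this, I would apply Theorem~\ref{thm:main} to $\tilde f\in\mathcal P(\S^d,G)$, obtaining $\tilde f(x,g)=\sum_{n}\varphi_{n,d}(g)\,c_n(d,x)$ with $\varphi_{n,d}\in\mathcal P(G)$ and $\sum_n\varphi_{n,d}(e)<\infty$. The decisive observation is that the coefficient formula \eqref{eq:coef} expresses $\varphi_{n,d}(g)$ as an integral of $\tilde f(x,g)=f(x,\zeta_0\cdot g\zeta_0)$ against $c_n(d,x)(1-x^2)^{d/2-1}$, so $\varphi_{n,d}$ depends on $g$ only through $\zeta_0\cdot g\zeta_0$; hence there is a continuous real function $\psi_{n,d}:[-1,1]\to\R$ with $\varphi_{n,d}(g)=\psi_{n,d}(\zeta_0\cdot g\zeta_0)$. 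Using transitivity once more I would check that $\psi_{n,d}\in\mathcal P(\S^{d'})$, since for $\zeta_k=u_k\zeta_0$ one has $[\psi_{n,d}(\zeta_k\cdot\zeta_l)]=[\varphi_{n,d}(u_k^{-1}u_l)]$, which is positive semidefinite. This transfer between positive definiteness on the compact group $G=O(d'+1)$ and positive definiteness on $\S^{d'}$ is the genuinely structural step, and the one I expect to be the main obstacle: it rests entirely on transitivity and on the stabilizer being $O(d')$, so that the isotropic form $\psi_{n,d}(\zeta_0\cdot g\zeta_0)$ is forced on the group-theoretic coefficient functions produced by Theorem~\ref{thm:main}.

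The remainder is bookkeeping. Schoenberg's Theorem~\ref{thm:S-sphere} gives $\psi_{n,d}(y)=\sum_{m}\widehat f_{n,m}\,c_m(d',y)$ with $\widehat f_{n,m}\ge0$ and $\sum_m\widehat f_{n,m}=\psi_{n,d}(1)=\varphi_{n,d}(e)$, using $c_m(d',1)=1$. Substituting into the expansion of $\tilde f$ and writing the generic argument $y$ in place of $\zeta_0\cdot g\zeta_0$ yields the double expansion \eqref{eq:2pd}, and summing the last identity over $n$ gives $\sum_{n,m}\widehat f_{n,m}=\sum_n\varphi_{n,d}(e)<\infty$. Uniform convergence then follows from the Weierstrass $M$-test because $|c_n(d,x)|\le1$ and $|c_m(d',y)|\le1$, while formula \eqref{eq:coef2pd} results from inserting the two Schoenberg coefficient integrals for $\psi_{n,d}$ and for $f(\cdot,y)$ and interchanging the order of integration, justified by boundedness of $f$ and integrability of the weights. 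Finally, the converse implication is the easy direction: each kernel $c_n(d,\xi\cdot\eta)\,c_m(d',\zeta\cdot\chi)$ is a product of kernels positive definite on $\S^d$ and on $\S^{d'}$, hence positive definite on the product, and Proposition~\ref{thm:elem} shows the convergent nonnegative combination \eqref{eq:2pd} remains positive definite.
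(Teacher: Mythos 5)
Your proposal is correct and follows essentially the same route as the paper: pull $f$ back to $[-1,1]\times O(d'+1)$ via the transitive action on $\S^{d'}$ with base point $\varepsilon_1$, apply Theorem~\ref{thm:main}, observe from the coefficient formula \eqref{eq:coef} that the resulting positive definite functions on $O(d'+1)$ factor through $g\mapsto\varepsilon_1\cdot g\varepsilon_1$ (the paper phrases this as bi-invariance under the stabilizer $O(d')$), identify them as elements of $\mathcal P(\S^{d'})$, and finish with Schoenberg's Theorem~\ref{thm:S-sphere}. The only differences are presentational.
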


\begin{proof} It is elementary to see that the right-hand side of \eqref{eq:2pd} defines a function $f\in\mathcal P(\S^d,\S^{d'})$. 

Let us next consider  $f\in\mathcal P(\S^d,\S^{d'})$ and define $F:[-1,1]\times O(d'+1)\to\R$ by
\begin{equation*}\label{eq:biinv}
F(x,A)=f(x,A\varepsilon_1\cdot\varepsilon_1), \quad x\in [-1,1],\;A\in O(d'+1),
\end{equation*} 
where $\varepsilon_1=(1,0,\ldots,0)\in\S^{d'}$. Then $F\in\mathcal P(\S^d,O(d'+1))$ because
$$
F(x,B^{-1}A)=f(x,A\varepsilon_1\cdot B\varepsilon_1),\quad A,B\in O(d'+1).
$$

By Theorem~\ref{thm:main}
\begin{equation}\label{eq:2pd1}
F(x,A)=\sum_{n=0}^\infty \varphi_{n,d}(A)c_n(d,x),\quad x\in[-1,1],\; A\in O(d'+1),
\end{equation}
and
\begin{equation}\label{eq:2pd2}
\varphi_{n,d}(A)=\frac{N_n(d)\sigma_{d-1}}{\sigma_d}\int_{-1}^1 f(x,A\varepsilon_1\cdot\varepsilon_1)c_n(d,x)(1-x^2)^{d/2-1}\,{\rm d}x
\end{equation}
belongs to  $\mathcal P(O(d'+1))$. 

The fixed point group of $\varepsilon_1$
$$
\{A\in O(d'+1)\mid A\varepsilon_1=\varepsilon_1\},
$$
is isomorphic to $O(d')$, so we denote it $O(d')$.

The function $\varphi_{n,d}$ is bi-invariant under $O(d')$,
i.e.,
$$
\varphi_{n,d}(KAL)=\varphi_{n,d}(A),\quad A\in O(d'+1),\,K,L\in O(d').
$$
This is simply because $f(x,KAL\varepsilon_1\cdot\varepsilon_1)=f(x,A\varepsilon_1\cdot\varepsilon_1)$.

The mapping $A\mapsto A\varepsilon_1$ is a continuous surjection of $O(d'+1)$ onto $\S^{d'}$, and it induces a homeomorphism of the homogeneous space $O(d'+1)/O(d')$ onto $\S^{d'}$.

It is easy to see that as a bi-invariant function, $\varphi_{n,d}$ has the form 
$$
\varphi_{n,d}(A)=g_{n,d}(A\varepsilon_1\cdot\varepsilon_1)
$$
for a uniquely determined continuous function $g_{n,d}:[-1,1]\to\R$. We have in addition $g_{n,d}\in\mathcal P(\S^{d'})$, because for $\xi_1,\ldots,\xi_n\in\S^{d'}$ there exist 
$A_1,\ldots,A_n\in O(d'+1)$ such that $\xi_j=A_j\varepsilon_1,j=1,\ldots,n$, hence
$$
g_{n,d}(\xi_k\cdot\xi_l)=g_{n,d}(A_l^{-1}A_k\varepsilon_1\cdot\varepsilon_1)=\varphi_{n,d}(A_l^{-1}A_k).
$$
This means that Equation \eqref{eq:2pd2} can be written
\begin{equation*}\label{eq:2pd2a}
g_{n,d}(y)=\frac{N_n(d)\sigma_{d-1}}{\sigma_d}\int_{-1}^1 f(x,y)c_n(d,x)(1-x^2)^{d/2-1}\,{\rm d}x,\quad y\in[-1,1].
\end{equation*}
By Schoenberg's Theorem~\ref{thm:S-sphere}
\begin{equation}\label{eq:2pd3} 
g_{n,d}(y)=\sum_{m=0}^\infty b^{(n,d)}_{m,d'}c_m(d',y),\quad y\in [-1,1],
\end{equation}
where
$$
b^{(n,d)}_{m,d'}=\frac{N_m(d')\sigma_{d'-1}}{\sigma_{d'}}\int_{-1}^1g_{n,d}(y)c_m(d',y)(1-y^2)^{d'/2-1}\,{\rm d}y,
$$
hence by Equations \eqref{eq:2pd1}, \eqref{eq:2pd3}
$$
F(x,A)=f(x,A\varepsilon_1\cdot\varepsilon_1)=\sum_{n=0}^\infty\left(\sum_{m=0}^\infty b^{(n,d)}_{m,d'}c_m(d',A\varepsilon_1\cdot\varepsilon_1)\right)c_n(d,x),
$$
which is equivalent to Equation \eqref{eq:2pd}, and $\widehat{f}_{n,m}=b^{(n,d)}_{m,d'}$ is given by Equation \eqref{eq:coef2pd}.
\end{proof}

\begin{thm}[Theorem 3.4 of \cite{G:M:P}] Let $d'\in\N$ and let $f:[-1,1]^2\to\R$ be a continuous function. Then $f\in\mathcal P(\S^{\infty},\S^{d'})$ if and only if
\begin{equation}\label{eq:2pd4}
f(x,y)=\sum_{n,m=0}^\infty \widetilde{f}_{n,m}x^n c_m(d',y),\quad x,y\in[-1,1],
\end{equation} 
where $\widetilde{f}_{n,m}\ge 0$ such that $\sum \widetilde{f}_{n,m}<\infty$.

The  above expansion is uniformly convergent, and we have
$$
\widetilde{f}_{n,m}=\frac{N_m(d')\sigma_{d'-1}}{\sigma_{d'} n!}\int_{-1}^1\frac{\partial^n f(x,y)}{\partial x^n}|_{x=0}\,c_m(d',y)(1-y^2)^{d'/2-1}\,{\rm d}y.
$$
\end{thm}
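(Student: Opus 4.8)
The plan is to run the proof of the preceding theorem verbatim, with the single change that Theorem~\ref{thm:main} is replaced by Theorem~\ref{thm:main2}, so that the $x$-variable is expanded in the monomials $x^n$ rather than in the Gegenbauer polynomials $c_n(d,x)$. The easy direction is immediate. For nonnegative $\widetilde f_{n,m}$ with $\sum \widetilde f_{n,m}<\infty$, each summand $x^n c_m(d',y)$ produces the kernel $(\xi\cdot\eta)^n\, c_m(d',\zeta\cdot\chi)$, which is a product of a positive definite kernel on $\S^\infty$ (namely $(\xi\cdot\eta)^n$, positive definite by Schoenberg's second theorem \eqref{eq:Sch4}) and a positive definite kernel on $\S^{d'}$ (namely $c_m(d',\zeta\cdot\chi)$, positive definite by Theorem~\ref{thm:S-sphere}); the Schur product of two positive definite kernels is positive definite, and a uniformly convergent sum of such products with nonnegative weights stays in $\mathcal P(\S^\infty,\S^{d'})$.

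For the converse I would take $f\in\mathcal P(\S^\infty,\S^{d'})$ and introduce $F(x,A)=f(x,A\varepsilon_1\cdot\varepsilon_1)$ on $[-1,1]\times O(d'+1)$, exactly as in the previous proof. Since $f\in\mathcal P(\S^d,\S^{d'})$ for \emph{every} $d$, the identical argument shows $F\in\mathcal P(\S^d,O(d'+1))$ for every $d$, whence $F\in\mathcal P(\S^\infty,O(d'+1))=\bigcap_d \mathcal P(\S^d,O(d'+1))$. Applying Theorem~\ref{thm:main2} with $G=O(d'+1)$ then yields a uniformly convergent expansion $F(x,A)=\sum_{n=0}^\infty \varphi_n(A)x^n$ with $\varphi_n\in\mathcal P(O(d'+1))$ and $\sum_n\varphi_n(e)<\infty$, where by \eqref{eq:Sch22} the coefficient functions are identified as $\varphi_n(A)=\frac{1}{n!}\frac{\partial^n F(0,A)}{\partial x^n}$.

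It remains to transport the $\varphi_n$ back to $\S^{d'}$ and expand them. Because $f(x,KAL\varepsilon_1\cdot\varepsilon_1)=f(x,A\varepsilon_1\cdot\varepsilon_1)$ for $K,L$ in the fixed-point subgroup $O(d')$ of $\varepsilon_1$, the function $F$, and hence each $\varphi_n$, is bi-invariant under $O(d')$; as in the previous proof this forces $\varphi_n(A)=g_n(A\varepsilon_1\cdot\varepsilon_1)$ for a unique continuous $g_n:[-1,1]\to\R$ with $g_n\in\mathcal P(\S^{d'})$, and the identification of $\varphi_n$ gives $g_n(y)=\frac{1}{n!}\frac{\partial^n f(0,y)}{\partial x^n}$. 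Expanding each $g_n$ by Schoenberg's Theorem~\ref{thm:S-sphere} as $g_n(y)=\sum_m b^{(n)}_{m,d'}c_m(d',y)$ with $b^{(n)}_{m,d'}\ge 0$ and substituting into the $x$-series produces \eqref{eq:2pd4} with $\widetilde f_{n,m}=b^{(n)}_{m,d'}$; the stated integral formula follows from the coefficient formula of Theorem~\ref{thm:S-sphere} applied to $g_n$ together with $g_n(y)=\frac{1}{n!}\partial_x^n f(0,y)$. Finally, evaluating at the identity gives $\sum_m \widetilde f_{n,m}=g_n(1)=\varphi_n(e)$, so $\sum_{n,m}\widetilde f_{n,m}=\sum_n\varphi_n(e)<\infty$, and the Weierstrass $M$-test secures uniform convergence of the double series. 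I do not expect a genuine obstacle here, since everything is an adaptation of the preceding proof; the only point needing care is that the identity $g_n(y)=\frac{1}{n!}\partial_x^n f(0,y)$ be legitimate, i.e. that the required $x$-derivatives exist and are continuous, which is precisely what Corollary~\ref{thm:Z2} guarantees and what underlies Theorem~\ref{thm:main2}.
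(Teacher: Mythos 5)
Your proof is correct and takes essentially the same approach the paper intends: the paper's own ``proof'' consists solely of the remark that it is analogous to that of the preceding theorem, and your write-up carries out exactly that adaptation, replacing Theorem~\ref{thm:main} by Theorem~\ref{thm:main2} and using Corollary~\ref{thm:Z2} together with the identification \eqref{eq:Sch22} to pin down the coefficient functions. No gaps.
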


The proof is analogous to the proof of the previous theorem.

\subsection*{Acknowledgments}
{\small This work was initiated during the visit of the first author to Universidad T\'ecnica Federico Santa Maria, Chile. The visit and E.P. have been supported by Proyecto Fondecyt Regular.

The authors want to thank two independent referees for useful suggestions and references.}

\noindent
Christian Berg\\
Department of Mathematical Sciences, University of Copenhagen\\
Universitetsparken 5, DK-2100, Denmark\\
e-mail: {\tt{berg@math.ku.dk}}

\vspace{0.4cm}
\noindent
Emilio Porcu \\
Department of Mathematics, Universidad T{\'e}cnica Federico Santa Maria\\
Avenida Espa{\~n}a 1680, Valpara{\'\i}so, 2390123, Chile\\
e-mail: {\tt{emilio.porcu@usm.cl}}


\begin{thebibliography}{xxx}
\bibitem{A:A:R} G.~E.~Andrews, R.~Askey and R.~Roy, Special Functions. Cambridge University Press 1999. 

\bibitem{Ba} C.~Bachoc, Semidefinite programming, harmonic analysis and coding theory. ArXiv:0909.4767v2, 2010.

\bibitem{B:M} V.~S.~Barbosa and V.~A.~Menegatto, Differentiable positive definite functions on two-point homogeneous spaces. J. Math. Anal. Appl. {\bf 434} (2016), 698--712.

\bibitem{B} C.~Berg, Corps convexes et potentiels sph{\'e}riques, Mat.--Fys. Medd. Danske Vid. Selsk. {\bf 37} No. 6 (1969), 64 pages. 

\bibitem{berg2008} C.~Berg, Stieltjes-Pick-Bernstein-Schoenberg and their
connection to complete monotonicity. Pages 15--45 in  Positive Definite Functions: From Schoenberg to Space-Time challenges. J.~Mateu and E.~Porcu eds. Castell{\'o}n de la Plana 2008.

\bibitem{Bh} R.~Bhatia, Positive Definite Matrices. Princeton University Press, Princeton, 
New Jersey, 2007.

\bibitem{Bi} N.~H.~Bingham, Positive definite functions on spheres. Mathematical Proceedings of the Cambridge Philosophical Society {\bf 73} (1973), 145--156. 

\bibitem{Bo} S.~Bochner, Hilbert distances and positive definite functions. Annals of Mathematics
{\bf 42}, 3 (1941), 647--656.

\bibitem{djdep2013} D.~J.~Daley and E.~Porcu, Dimension walks and Schoenberg spectral measures. Proceedings of the American Mathematical Society, {\bf 142}, 5 (2014), 1813--1824.

\bibitem{D:X} F.~Dai and Y.~Xu, Approximation Theory and Harmonic Analysis on Spheres and Balls. Springer monographs in mathematics. Springer, New York 2013.

\bibitem{D} J.~Dixmier, Les $C^*$-alg{\`e}bres et leurs repr{\'e}sentations. Gauthier-Villars, Paris, 1964.

\bibitem{gneiting2013} T.~Gneiting, Strictly and non-strictly positive definite functions on spheres,
 Bernoulli {\bf 19}(4),  (2013), 1327--1349.

\bibitem{Gn} T.~Gneiting,  Online supplemental materials to "Strictly and non-strictly positive definite functions on spheres"
doi:10.3150/12-BEJSP06. http://projecteuclid.org/euclid.bj/1377612854.

\bibitem{G:R} I.~S.~Gradshteyn and I.~M.~Ryzhik,  Table of
  Integrals, Series and Products, Sixth Edition. Academic Press, San
Diego, 2000.

 \bibitem{G:M:P} J.~C.~Guella, V.~A.~Menegatto and A.~P.~Peron, An extension of a theorem of Schoenberg to products of spheres. arXiv:1503.08174.

\bibitem{H:J} R.~A.~Horn and C.~R.~Johnson, Matrix Analysis. Corrected reprint of the 1985 original. Cambridge University Press, Cambridge, 1990.

\bibitem{Mu} J.~R.~Munkres, Topology, Second Edition. Prentice Hall, Inc. New Jersey, 2000.

\bibitem{M} C.~M\"uller, Spherical Harmonics. Lecture Notes in Mathematics. Springer-Verlag.
Berlin, Heidelberg, New York, 1966.

\bibitem{M:N:P:R} J.~M{\o}ller, M.~Nielsen, E.~Porcu and E.~Rubak,
Determinantal point process models on the sphere. Submitted.

\bibitem{P:B:G} E.~Porcu, M.~Bevilacqua and M.~G.~Genton, Spatio-Temporal Covariance and Cross-Covariance Functions of the Great Circle Distance on a Sphere, Journal of the American Statistical Association. Forthcoming.


\bibitem{R} W.~Rudin, Fourier Analysis on Groups. Interscience Publishers, New York, London, 1962.

\bibitem{R1} W.~Rudin, Real and Complex Analysis. McGraw-Hill Book Company, Singapore,  1986. 

\bibitem{sasvari} Z.~Sasv{\'a}ri, Positive Definite and Definitizable Functions.  Akademie Verlag, Berlin, 1994.

\bibitem{S} I.~J.~Schoenberg, Positive definite functions on spheres,  Duke Math. J.  {\bf 9} (1942), 96--108.

\bibitem{Sh} V.~L.~Shapiro, Fourier series in several variables with applications to partial differential equations. Chapman \& Hall/CRC Applied Mathematics and Nonlinear Science Series. CRC Press, Boca Raton, FL, 2011.

\bibitem{Z} J.~Ziegel, Convolution roots and differentiability of isotropic positive definite functions on spheres, Proc. Amer. Math. Soc. {\bf 142}, No. 6 (2014), 2063--2077.
\end{thebibliography}
\end{document}